\tikzset{%
  transition/.style={rectangle,minimum size=6mm,draw},
  place/.style={circle,minimum size=6mm,draw},
  database/.style={
    minimum width=2cm,minimum height=1cm,cylinder,
    shape border rotate=90,aspect=0.2,draw
  }
}
\theoremstyle{plain}
\newtheorem{theorem}{Theorem}
\newtheorem{lemma}[theorem]{Lemma}
\newtheorem{proposition}[theorem]{Proposition}
\newtheorem{corollary}[theorem]{Corollary}
\newtheorem{observation}[theorem]{Observation}
\newtheorem*{claim}{Claim}
\theoremstyle{definition}
\newtheorem{example}[theorem]{Example}
\newcommand{\RR}{\mathbb{R}}
\newcommand{\ZZ}{\mathbb{Z}}
\newcommand{\NN}{\mathbb{N}}
\DeclareMathOperator{\vol}{vol}
\DeclareMathOperator{\conv}{conv}
\DeclareMathOperator{\Bin}{Bin}
\newcommand\gfan{\texttt{Gfan}\xspace}
\newcommand\topcom{\texttt{TOPCOM}\xspace}
\newcommand\mptopcom{\texttt{MPTOPCOM}\xspace}
\newcommand\matlab{{\tt MATLAB} \xspace}
\newcommand{\pointconf}{P} 
\newcommand{\unvisited}{U}
\newcommand{\visited}{W}
\newcommand{\queue}{Q}
\newcommand{\processed}{P}
\newcommand\expectation{\mathbb{E}}
\DeclareMathOperator\LPS{LPS}
\DeclareMathOperator\PGL{PGL}
\DeclareMathOperator\PSL{PSL}
\DeclarePairedDelimiter\norm{\lVert}{\rVert}
\title{Random growth on a Ramanujan graph} 
\author{Janko B\"ohm}
\address[J. B\"ohm]{Technische Universität Kaiserslautern
Gottlieb-Daimler-Stra\ss e, Geb. 48, 67663 Kaiserslautern, Germany}
\thanks{%
Research by J.~B\"ohm on this work has been supported by Project II.5 of SFB-TRR 195:
\enquote{Symbolic Tools in Mathematics and their Application} of Deutsche Forschungsgemeinschaft.
}
\email{\texttt{boehm@mathematik.uni-kl.de}}
\author{Michael Joswig}
\thanks{%
Research by M.~Joswig is supported by Deutsche Forschungsgemeinschaft (EXC 2046: \enquote{MATH$^+$}, SFB-TRR 109: \enquote{Discretization in Geometry and Dynamics}, SFB-TRR 195: \enquote{Symbolic Tools in Mathematics and their Application}, and GRK 2434: \enquote{Facets of Complexity}).
}
\author{Lars Kastner}
\thanks{%
Research by L. Kastner is supported by Deutsche Forschungsgemeinschaft (SFB-TRR 195: ``Symbolic Tools in Mathematics and their Application'').
}
\author{Andrew Newman}
\thanks{Research by A. Newman is supported by Deutsche Forschungsgemeinschaft  Graduiertenkolleg ``Facets of Complexity" (GRK 2434)}
\address[M. Joswig, L. Kastner, and A. Newman]{
Institut f{\"u}r Mathematik, MA 6-2,
Technische Universität Berlin,
Str.\ des 17. Juni 136, 10623 Berlin, Germany
}
\email{\texttt{\{joswig, kastner, newman\}@math.tu-berlin.de}}
\keywords{graph expansion; spectral gap; flip-graphs of triangulations}
\subjclass[2010]{%
  05C81   
  (68R10, 
  52B55) 
}
\begin{document}

\maketitle

\begin{abstract}
  The behavior of a certain random growth process is analyzed on arbitrary regular and non-regular graphs.
  Our argument is based on the Expander Mixing Lemma, which entails that the results are strongest for Ramanujan graphs, which asymptotically maximize the spectral gap.
  Further, we consider Erd\H{o}s--Rényi random graphs and compare our theoretical results with computational experiments on flip graphs of point configurations.
  The latter is relevant for enumerating triangulations.
\end{abstract}


\section{Introduction}

Many collections of mathematical objects can be equipped with a graph structure, and so their enumeration can be considered as visiting all nodes of such a graph.
Standard methods for visiting all nodes in a graph, say $G=(V,E)$, include the depth-first search and breadth-first search algorithms.
The common theme is that the set of visited nodes strictly grows which each step, until it covers the entire node set $V$.
This naturally leads to the following vast generalization.
We call any increasing sequence $P_1\subseteq P_2 \subseteq \cdots \subseteq V$ of node sets a \emph{growth process} on $G$, provided that $\bigcup_i P_i=V$.
Our main tool is a certain random growth process, Algorithm~\ref{algo:growth}, which maintains a \emph{queue} in addition to the randomly constructed set $P_t$ of nodes which have been processed at time~$t$.
This will be employed to estimate the number of nodes of a finite graph, which is given only implicitly.
Similar in spirit is a random sampling algorithm for estimating the size of a tree by Hall and Knuth~\cite{HallKnuth:1965,Knuth:1975}.

Naturally, estimating the size of a graph via a random process can only yield meaningful results if that graph is somehow \enquote{well connected}.
While there are several notions measuring this, here we we settle for expansion expressed in terms of spectral properties of the adjacency matrix; cf.\ \cite{AS,HLW}.
For a $d$-regular graph the largest eigenvalue is $d$, and the second largest (in absolute value), denoted $\lambda$, defines the \emph{spectral gap} $d-\lambda$.
Our first main result, Theorem~\ref{thm:main2}, is a lower bound on the expected size of the queue during a randomized breadth-first search through an arbitrary regular graph.
That lower bound becomes tighter the larger the spectral gap is, and the asymptotic optimum is marked by the regular Ramanujan graphs.
The proof of Theorem~\ref{thm:main2} is based on the Expander Mixing Lemma.
In Theorem~\ref{thm:VertexCounts} we use that result to also derive upper and lower bounds for the number of nodes of a regular graph from data obtained through randomized breadth-first search.
We experimentally check our estimates on a regular Ramanujan graph constructed by Lubotzky, Phillips and Sarnak~\cite{LubotzkyPhillipsSarnak:1988}.

Our motivation stems from enumerating triangulations of point sets in Euclidean space.
Standard methods are based on traversing the flip graph of the point configuration, which has a node for each triangulation, and the edges correspond to local modifications known as flips (or Pachner moves).
To estimate the number of triangulations from the input alone is next to impossible; see \cite[\S8.4]{Triangulations} for what is known.
We explore how our Theorems~\ref{thm:main2} and \ref{thm:VertexCounts} can help.
Again we do experiments, this time on the flip graphs of convex polygons, a topic intimately linked to the combinatorics of Catalan numbers.
These particular flip graphs are regular, but the exact spectral expansion of these graphs seems to be unknown.
An explicit computation reveals that the flip graph of a $k$-gon is a regular Ramanujan graph for $k\leq 7$, whereas the spectral gap seems to vanish pretty quickly for higher values of~$k$.

Most flip graphs are neither regular, nor do they appear to exhibit good expansion.
We address both issues separately.
First, we need to generalize our results to the non-regular setting.
We follow the standard approach to study the spectral expansion of non-regular graphs via the normalized adjacency matrix; cf.\ \cite{CG}.
Our main contribution here is a specific non-regular version of the Expander Mixing Lemma, which seems to be new.
This allows to interpret Theorems~\ref{thm:main2} and \ref{thm:VertexCounts} also in the non-regular setting.
Second, we study the expected asymptotic behavior of Algorithm~\ref{algo:growth} on random graphs in the Erd\H{o}s--Rényi model, both theoretically and experimentally.

The the paper is closed with yet one more experiment, on the quotient flip graph of the regular 4-cube.
The \enquote{quotient} comes from looking at triangulations modulo the natural symmetry of the cube.
The estimates resulting from Theorem~\ref{thm:VertexCounts} work surprisingly well, although the assumptions made in that result are not met by the quotient flip graph of the 4-cube.
Finally, again for the quotient flip graph of the 4-cube, we compare our estimates with those obtained from the Hall--Knuth sampling procedure \cite{HallKnuth:1965,Knuth:1975}.

\section{Expansion of regular graphs and a random growth process}\label{sec:RegularGraphs}
Here we describe a specific random growth process on arbitrary graphs which we ultimately want to study in the context of enumerating triangulations.
Yet we first set out to examine this random process in a much more idealized setting.
That is, we would like to understand its expected behavior on an \emph{expander graph}. 

Expander graphs have been studied quite extensively, and  we refer the reader to the survey of Hoory, Linial, and Wigderson \cite{HLW} or to \cite[Section 9.2]{AS} for an overview of many of their definitions and properties. Roughly speaking, for our purposes, an expander graph is a sparse graph on which a random walk mixes quickly.
To make this precise in our setting, we will introduce Ramanujan graphs. 

Let $G=(V,E)$ be an undirected graph with adjacency matrix $A$.
The \emph{eigenvalues} of $G$ are defined to be the eigenvalues $\lambda_1,\dots,\lambda_n$ of $A$.
Since $A$ is a symmetric matrix its eigenvalues are real, so we may assume that $\lambda_1 \geq \lambda_2 \geq \cdots \geq \lambda_n$.
Moreover, if $G$ is $d$-regular then $\lambda_1 = d$, and for all $i$ we have $|\lambda_i| \leq d$.
Let $\lambda(G) = \max\{|\lambda_2|, |\lambda_n|\} = \max\{|\lambda_i|\ |\ i=2,\ldots,n\}$.
A \emph{Ramanujan graph} $G$ is $d$-regular graph with $\lambda(G) \leq 2 \sqrt{d - 1}$.

The reader familiar with expander graphs will not find this topic of the spectrum of a graph surprising at all. On the other hand, to those unfamiliar with expander graphs this definition of Ramanujan graphs may seem strange in light of the rough description of expander graphs in terms of random walks. The well-known Expander Mixing Lemma, which likely first appeared in \cite{AlonChung} provides one (of many) connections between these two notions.
Moreover, the proof of our results will make extensive use of this lemma. 

The results we state here for $G$ a $d$-regular graph will be expressed in terms of $\lambda$, which we take as a shorthand for $\lambda(G)$.
Our results will be nontrivial whenever $\lambda<d$.
It is standard that this occurs exactly when $G$ is connected and not bipartite.
However, our results yield the strongest consequences when $d - \lambda$ is as large as possible.
That is in the case of Ramanujan graphs by a result of Alon \cite{Alon84,Alon91}, where the threshold $2\sqrt{d - 1}$ is explained as follows; see also Bilu and Linial \cite{BiluLinial:2006}.

\begin{theorem}[Alon--Boppana Theorem \cite{Alon84}]
For every $d \geq 2$ and every $\epsilon > 0$, there are only finitely many $d$-regular graphs $G$ with $\lambda \leq 2\sqrt{d - 1} - \epsilon$.
\end{theorem}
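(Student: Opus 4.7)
The plan is to exploit the fact that the universal cover of any $d$-regular graph is the infinite $d$-regular tree $T_d$, whose adjacency spectral radius equals $2\sqrt{d-1}$. More precisely, I would show that for every $\epsilon>0$ there is an $n_0$ such that any $d$-regular graph on $n\geq n_0$ vertices satisfies $\lambda>2\sqrt{d-1}-\epsilon$. Since for each fixed $n$ there are only finitely many $d$-regular graphs on $n$ vertices, the finiteness assertion follows. The engine behind the estimate is the standard moment method, comparing traces of $A^{2k}$ to closed-walk counts in the universal cover.

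First, I would bound $\operatorname{tr}(A^{2k})$ from below by counting closed walks. Fix $k\geq 1$ and let $N_{2k}$ denote the number of closed walks of length $2k$ in $T_d$ based at the root. Since walks in $G$ lift uniquely once a starting vertex is fixed, the covering map gives an injection from closed walks of length $2k$ in $T_d$ at the root into closed walks of length $2k$ in $G$ at any prescribed $v\in V$. Hence $(A^{2k})_{vv}\geq N_{2k}$ for every $v$, and summing over $V$ yields
\[
\operatorname{tr}(A^{2k}) \;\geq\; n\,N_{2k}.
\]

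Next, I would bound the trace from above using $d$-regularity. Because $\lambda_1=d$ and $|\lambda_i|\leq\lambda$ for $i\geq 2$, we have
\[
\operatorname{tr}(A^{2k}) \;=\; d^{2k}+\sum_{i=2}^{n}\lambda_i^{2k} \;\leq\; d^{2k}+(n-1)\lambda^{2k}.
\]
Combining the two bounds gives
\[
\lambda^{2k} \;\geq\; \frac{n\,N_{2k}-d^{2k}}{n-1},
\]
and for each fixed $k$ the right-hand side tends to $N_{2k}$ as $n\to\infty$.

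Finally, I would let $k$ grow after $n$. The well-known Kesten--McKay description of the spectrum of $T_d$ (equivalently, a direct computation with Catalan-like numbers counting closed tree walks) gives the asymptotic $N_{2k}^{1/(2k)}\to 2\sqrt{d-1}$ as $k\to\infty$. Given $\epsilon>0$, I would choose $k$ so large that $N_{2k}^{1/(2k)}>2\sqrt{d-1}-\epsilon/2$, and then choose $n_0$ so that for $n\geq n_0$ the displayed lower bound on $\lambda^{2k}$ exceeds $(2\sqrt{d-1}-\epsilon)^{2k}$; this is the required statement. The one nontrivial input is the asymptotic $N_{2k}^{1/(2k)}\to 2\sqrt{d-1}$, which is the main obstacle in a self-contained presentation; it can be obtained either by identifying $N_{2k}$ with the $2k$-th moment of the Kesten--McKay measure on $[-2\sqrt{d-1},2\sqrt{d-1}]$, or by an elementary but careful estimate decomposing each closed tree walk into a Dyck path weighted by $(d-1)^{\text{height changes}}$ and applying Stirling's formula.
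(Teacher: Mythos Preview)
Your argument is correct and is essentially the standard trace/moment proof of the Alon--Boppana bound via the universal covering tree. However, there is nothing to compare against: the paper does not give its own proof of this theorem. It is stated with a citation to \cite{Alon84} purely as background, to motivate why $2\sqrt{d-1}$ is the right threshold in the definition of a Ramanujan graph and why the results in Theorems~\ref{thm:main2} and~\ref{thm:VertexCounts} are sharpest in that regime. So the appropriate remark is simply that this result is quoted, not proved, in the paper, and your write-up supplies a valid self-contained proof where the paper defers to the literature.
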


Before presenting our results, we give a precise statement of the Expander Mixing Lemma. We first define the notation $e(S, T)$ for a graph $G = (V, E)$ and $S,T \subseteq V$ to be the number of edges $(s,t)$ in $E$ where $s$ is in $S$ and $t$ is in $T$.
In particular, $e(S, T)$ counts edges with both endpoints in $S \cap T$ twice. 

\begin{theorem}[Expander Mixing Lemma]
  Let $G = (V, E)$ be a $d$-regular graph on $n$ vertices with $\lambda<d$.
  Then for any subsets $S$ and $T$ of $V$ we have
  \[ \left|e(S, T) - \frac{d\, |S|\, |T|}{n}\right| \ \leq \ \lambda \cdot \sqrt{|S|\, |T|\left(1 - \frac{|S|}{n}\right) \left(1 - \frac{|T|}{n}\right)} \enspace .\]
\end{theorem}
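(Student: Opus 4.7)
The plan is to expand the indicator vectors of $S$ and $T$ in an orthonormal eigenbasis of the adjacency matrix $A$ and read off $e(S,T)$ as a quadratic form, so that the deviation from the expected edge count $d|S||T|/n$ is controlled purely by the non-principal eigenvalues.

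First, I would observe that $e(S,T) = \mathbf{1}_S^\top A\, \mathbf{1}_T$, where $\mathbf{1}_S,\mathbf{1}_T \in \{0,1\}^V$ are the indicator vectors of $S$ and $T$. Since $G$ is $d$-regular, $A$ is symmetric and the all-ones vector $\mathbf{1}$ is an eigenvector with eigenvalue $d=\lambda_1$; normalizing gives $v_1 = \mathbf{1}/\sqrt{n}$. I would extend this to an orthonormal basis $v_1,\dots,v_n$ of eigenvectors of $A$ with corresponding eigenvalues $d=\lambda_1 \geq \lambda_2 \geq \cdots \geq \lambda_n$, where by assumption $|\lambda_i| \leq \lambda$ for all $i \geq 2$.

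Next, I would write $\mathbf{1}_S = \sum_i \alpha_i v_i$ and $\mathbf{1}_T = \sum_i \beta_i v_i$. Taking inner products with $v_1$ gives $\alpha_1 = |S|/\sqrt{n}$ and $\beta_1 = |T|/\sqrt{n}$, while Parseval's identity yields $\sum_i \alpha_i^2 = |S|$ and $\sum_i \beta_i^2 = |T|$. Expanding the quadratic form and separating the contribution of $v_1$ produces
\[ e(S,T) \ = \ \sum_{i=1}^n \lambda_i \alpha_i \beta_i \ = \ \frac{d|S||T|}{n} \,+\, \sum_{i=2}^n \lambda_i \alpha_i \beta_i, \]
so the problem reduces to bounding the tail $\sum_{i \geq 2} \lambda_i \alpha_i \beta_i$ in absolute value.

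For that last step, I would apply $|\lambda_i| \leq \lambda$ for $i \geq 2$ together with the Cauchy--Schwarz inequality to obtain
\[ \left|\sum_{i \geq 2} \lambda_i \alpha_i \beta_i\right| \ \leq \ \lambda \sum_{i\geq 2} |\alpha_i\beta_i| \ \leq \ \lambda \sqrt{\sum_{i\geq 2}\alpha_i^2}\,\sqrt{\sum_{i\geq 2}\beta_i^2}. \]
Substituting the identities $\sum_{i\geq 2}\alpha_i^2 = |S| - \alpha_1^2 = |S|(1-|S|/n)$ and the analogous one for $T$ delivers exactly the asserted inequality. I do not expect a genuine obstacle here; the proof is a short spectral computation, and the only two sources of slack are the uniform bound $|\lambda_i|\leq \lambda$ and Cauchy--Schwarz, both of which are standard in this context.
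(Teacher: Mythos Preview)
Your argument is correct and is the standard spectral proof of the Expander Mixing Lemma. Note, however, that the paper does not actually supply a proof of this theorem: it is quoted as a known result, attributed to Alon and Chung, and used as a black box throughout Section~\ref{sec:RegularGraphs}. So there is no ``paper's own proof'' to compare against here.

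If you want a point of contact with the paper's methods, the closest is the proof of Proposition~\ref{prop:generalExpMix}, the non-regular analogue. There the authors do not expand both indicator vectors in a full eigenbasis; instead they construct a single test vector $g$ (a signed, volume-weighted indicator) that is orthogonal to the Perron eigenvector of the normalized adjacency matrix $N$, and then bound $|\langle Ng,g\rangle|\le \mu\|g\|^2$ directly. That approach is tailored to the case where $S$ and $T$ partition $V$, which is all that is needed downstream; your eigenbasis expansion is the more general argument and recovers the full statement for arbitrary (not necessarily complementary) $S$ and $T$.
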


Here we are interested in the expected behavior of a randomized breadth-first search on a regular graph $G$. Toward stating and proving our results we recall the randomized breadth-first search algorithm and introduce the notation that we use.

\medskip

Our random growth process works on a connected graph $G$ as its input, and throughout the algorithm we keep track of three sets of vertices which always partition the vertices of~$G$.
The sets are denoted by $\processed$, $\queue$, and $\unvisited$ and respectively refer to the queued vertices, processed vertices, and unvisited vertices.

\begin{algorithm}
  \caption{Random Growth}
  \label{algo:growth}
  \begin{algorithmic}
    \Require{Some vertex $v_1$ of the connected graph $G$}
    \Ensure{Visit all vertices in $G$; i.e., the final state is  $\processed =V$, $\queue=\emptyset$, $\unvisited=\emptyset$}
    \State $\processed\leftarrow\emptyset$;\; $\queue\leftarrow\{v_1\}$;\;  $\unvisited\leftarrow V\setminus\{v_1\}$
    \While{$\queue\neq\emptyset$}
    \State pick a vertex $v$ uniformly at random from $\queue$
    \State $N\leftarrow\{w:w \text{ neighbor of } v \text{ in } U\}$
    \State  $\processed\leftarrow\processed\cup\{v\}$;\; $\queue\leftarrow(\queue\setminus\{v\})\cup N$;\; $\unvisited\leftarrow\unvisited\setminus N$
    \EndWhile
  \end{algorithmic}
\end{algorithm}

Occasionally we will use the variable $t \in \{0, 1, ..., n\}$ to refer to an arbitrary step of the process and when convenient, $\processed_t$, $\queue_t$, and $\unvisited_t$ refer to the sets $\processed$, $\queue$, and $\unvisited$,  at step $t$, with $|\processed_0| = 0$, $|\queue_0| = 1$ and $|\unvisited_0| = n - 1$.
Note that $|\processed_t| = t$ for all $t \in \{0, 1, ..., n\}$, and we denote by $v_t$ the unique vertex in $P_t\setminus P_{t-1}$.
The following is immediate.
\begin{lemma}\label{lemma:spanningtree}
  For each $t\geq 2$ let $p(t)$ be the minimal index in $\{1,2,\dots,t-1\}$ such that $v_t$ is a neighbor of $v_{p(t)}$.
  Then the edges $\{2,p(2)\},\{3,p(3)\},\dots,\{n,p(n)\}$ form a spanning tree of $G$ with root vertex $v_1$.
\end{lemma}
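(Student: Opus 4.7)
The plan is to establish the claim in three steps: (i) the index $p(t)$ is well-defined for every $t \in \{2, \ldots, n\}$, (ii) each of the $n-1$ pairs $\{t, p(t)\}$ corresponds to a genuine edge of $G$, and (iii) the resulting graph on vertex set $\{v_1, \ldots, v_n\}$ is connected, so that a straightforward edge count promotes it to a spanning tree.

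For step (i), I would trace when a vertex $w$ first enters the queue during a run of Algorithm~\ref{algo:growth}. Because each insertion into $\queue$ is simultaneously a deletion from $\unvisited$, and new elements of $\queue$ are always drawn from the current $\unvisited$, every vertex is enqueued at most once. Since $G$ is connected and $v_1 \in \queue_0$, the standard BFS argument ensures that every vertex is eventually processed, so the labeling $v_1, v_2, \ldots, v_n$ is unambiguous. For $t \geq 2$, the vertex $v_t$ was first added to $\queue$ during an iteration in which some previously processed $v_s$ (with $s < t$) was being handled; at that moment $v_t$ appeared in the neighbor set $N$ computed for $v_s$, which forces $v_s$ and $v_t$ to be adjacent in $G$. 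Consequently $\{s \in \{1,\ldots,t-1\} : v_s v_t \in E\} \neq \emptyset$, so its minimum $p(t)$ is well-defined, which also settles (ii): each $\{v_t, v_{p(t)}\}$ is an edge of $G$, and the $n-1$ pairs are pairwise distinct since their larger indices $t$ differ.

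For step (iii), I would orient each edge $\{v_t, v_{p(t)}\}$ from $v_t$ to $v_{p(t)}$. Because $p(t) < t$, every vertex $v_t$ with $t \geq 2$ has exactly one out-edge, while $v_1$ has none. Following out-edges starting from any $v_t$ strictly decreases the index, so the walk terminates at $v_1$ in at most $t - 1$ steps. Hence every vertex is joined to $v_1$ by a path in the proposed edge set, proving connectivity. Combined with the edge count $n - 1$, this forces the graph to be a spanning tree, rooted naturally at $v_1$.

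No serious obstacle is anticipated. The only subtle point is the bookkeeping in step~(i), namely that a vertex is enqueued exactly once and that its enqueueing identifies an earlier processed neighbor; once this is in hand, the orientation argument in step~(iii) makes acyclicity and connectivity transparent without recourse to any induction.
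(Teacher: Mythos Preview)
Your argument is correct. The paper does not actually prove this lemma; it merely introduces it with ``The following is immediate'' and moves on. Your three-step elaboration (well-definedness of $p(t)$ via the enqueueing step, the edge count, and the index-decreasing orientation forcing connectivity to $v_1$) is a clean and standard way to unpack that immediacy, and nothing in it is in tension with the paper's setup.
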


We seek bounds on the expected size of $\queue_t$ as $t$ varies from 0 to $n$.
As our results hold for large $n$, it makes sense to consider the densities of $\processed$, $\queue$, and $\unvisited$ rather than their sizes.
Let
\begin{equation} \label{eq:gamma-delta-epsilon}
   \pi:=|\processed|/n \,,\quad \kappa:=|\queue|/n \,, \quad \upsilon:=|\unvisited|/n \enspace.
\end{equation}
Thus, rather than fixing $n$ and deriving upper and lower bound for $\queue = \queue_t$ as $t$ varies from 0 to $n$, instead we derive upper and lower bounds for $\kappa \in [0, 1]$ in terms of $\pi \in [0, 1]$.

The first result is a \emph{structural lower bound} on $\kappa$ in terms of $\pi$ that derives from the Expander Mixing Lemma and the fact that there are never any edges from $\processed$ to $\unvisited$.
Note that this structural lower bound holds regardless of how we pick the next vertex in $\queue$. 

\begin{proposition}\label{prop:main}
  Let $G$, $d$, $n$, and $\lambda < d$ be as in the statement of the Expander Mixing Lemma.
  Further, let $t = \pi n$ be the number of steps completed by any random growth process on $G$.
  Then the density $\kappa$ of the queue $Q$ at step $t$ satisfies
  \[\kappa \ \geq \ 1 - \pi - \frac{\lambda^2(1-\pi)}{d^2\pi+\lambda^2(1-\pi)} \enspace .\]
\end{proposition}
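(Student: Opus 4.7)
The plan is to exploit the single structural fact about the algorithm that is not distribution-dependent: at every point in time there is no edge running directly from $\processed$ to $\unvisited$. Indeed, whenever a vertex $v$ moves from $\queue$ to $\processed$, all its neighbors in $\unvisited$ are pushed into $\queue$ in the same step, so any edge leaving $\processed$ must have its other endpoint in $\queue$. This is precisely the reason the bound is ``structural'' and holds independently of the rule used to pick vertices from the queue.

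Given this, I would apply the Expander Mixing Lemma with $S=\processed$ and $T=\unvisited$, so that $e(S,T)=0$. Writing $\pi=|\processed|/n$, $\kappa=|\queue|/n$ and $\upsilon=|\unvisited|/n$ (so $\pi+\kappa+\upsilon=1$), the Lemma becomes
\[
\frac{d\,|\processed|\,|\unvisited|}{n} \ \leq\ \lambda\sqrt{|\processed|\,|\unvisited|\Bigl(1-\tfrac{|\processed|}{n}\Bigr)\Bigl(1-\tfrac{|\unvisited|}{n}\Bigr)},
\]
which after dividing by $n$ and squaring (assuming $\pi,\upsilon>0$; the boundary cases are trivial) reduces to
\[
d^2\pi\,\upsilon \ \leq\ \lambda^2(1-\pi)(1-\upsilon).
\]

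From here the proof is a one-line manipulation: collect the $\upsilon$ terms on the left to obtain
\[
\upsilon\bigl(d^2\pi+\lambda^2(1-\pi)\bigr)\ \leq\ \lambda^2(1-\pi),
\]
hence
\[
\upsilon \ \leq\ \frac{\lambda^2(1-\pi)}{d^2\pi+\lambda^2(1-\pi)},
\]
and substituting into $\kappa = 1-\pi-\upsilon$ yields exactly the claimed lower bound.

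I do not expect any real obstacle here. The only subtle point is recognizing that the deterministic invariant $e(\processed,\unvisited)=0$ is what makes the Expander Mixing Lemma bite: it forces the ``discrepancy'' $d\pi\upsilon$ to be absorbed entirely by the $\lambda$-term, which is what translates the spectral gap into a quantitative lower bound on the density of the queue. One should also briefly justify dropping the absolute value in the Lemma (since $e(\processed,\unvisited)=0$ the quantity inside is non-negative) and note that the degenerate cases $\pi=0$ or $\upsilon=0$ make the inequality hold trivially, so that squaring is legitimate in the remaining range.
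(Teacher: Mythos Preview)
Your proposal is correct and follows essentially the same route as the paper: observe the structural invariant $e(\processed,\unvisited)=0$, apply the Expander Mixing Lemma with $S=\processed$ and $T=\unvisited$, and solve the resulting inequality for $\upsilon$ to obtain the bound on $\kappa=1-\pi-\upsilon$. If anything, you are slightly more careful than the paper in justifying the squaring step and handling the degenerate cases $\pi=0$ or $\upsilon=0$.
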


\begin{proof} 
  Let $\processed = \processed_t$, $\queue = \queue_t$, and $\unvisited = \unvisited_t$, and their densities $\pi$, $\kappa$ and $\upsilon$ be as described above with $|\processed_t| = t = \pi n$.
  Due to $\pi + \kappa + \upsilon = 1$, an upper bound on $\upsilon$ implies a lower bound on $\pi +\kappa$.
  Thus we will upper bound $\upsilon$ in terms of $\pi$.
  
  Observe that $e(\processed, \unvisited) = 0$.
  Indeed, during the entire process a vertex is only moved from $\queue$ to $\processed$ once all of its neighbors have been found and added to $\queue$.
  On the other hand by the Expander Mixing Lemma we obtain
  \[ e(\processed, \unvisited) \ \geq \ \frac{d\, \pi\, n\, \upsilon \, n}{n} - \lambda \sqrt{ \pi \, n \, \upsilon \, n \, (1 - \pi) \, (1 - \upsilon)} \enspace. \]
  Thus we have
  \[0 \ \geq \ d \, \pi \, \upsilon - \lambda \sqrt{\pi(1 - \pi) \upsilon(1 - \upsilon)} \enspace.\]
  Since $\pi, \upsilon \in [0, 1]$, it follows that $\upsilon$ is bounded as
  \[\upsilon \ \leq \ \frac{\lambda^2(1-\pi)}{d^2\pi+\lambda^2(1-\pi)} \enspace.\]
  The claim follows since $\kappa = 1 - \pi - \upsilon$.
\end{proof}





While Proposition \ref{prop:main} is a deterministic statement that does not require any assumption on the order in which the vertices are processed, we obtain the following improved lower bound by introducing randomness.
\begin{theorem}\label{thm:main2}
  Let $G$, $d$, $n$, and $\lambda < d$ be as in the Expander Mixing Lemma.
  Further, let $t = \pi n$ be the number of steps completed by the random growth on $G$.
  Then the expected density $\kappa$ of the queue $Q$ at step $t$ satisfies
  \[\expectation(\kappa) \ \geq \ 1 - \pi - \exp \left(-\left(d - \lambda\right)\left(1 + \frac{1}{d - 1}\right) \pi \right) \enspace . \]
\end{theorem}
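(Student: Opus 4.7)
The plan is to sharpen Proposition~\ref{prop:main} by converting its single-shot Expander Mixing Lemma estimate into a per-step recursive inequality on the expected density $\upsilon_t = |\unvisited_t|/n$. Writing $c = (d-\lambda)(1+\tfrac{1}{d-1})$, I aim to establish the one-step contraction
\[
\expectation(\upsilon_{t+1}\mid \processed_t,\queue_t,\unvisited_t)\ \leq\ \upsilon_t\left(1-\frac{c}{n}\right).
\]
Iterating this from $\upsilon_0\leq 1$ and using the elementary inequality $(1-c/n)^t\leq e^{-ct/n}$ yields $\expectation(\upsilon_t)\leq e^{-c\pi}$, whereupon the identity $\kappa_t=1-\pi-\upsilon_t$ delivers the claimed bound on $\expectation(\kappa)$.

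The per-step inequality rests on two ingredients. First, since step $t+1$ picks $v$ uniformly at random from $\queue_t$ and relocates its unvisited neighbors into the queue, the expected drop in $|\unvisited|$ is exactly $e(\queue_t,\unvisited_t)/|\queue_t|$. Using $e(\processed_t,\unvisited_t)=0$, the $d$-regularity of $G$ forces $e(\queue_t,\unvisited_t)=d|\unvisited_t|-e(\unvisited_t,\unvisited_t)$, and applying the Expander Mixing Lemma with $S=T=\unvisited_t$ produces the clean estimate
\[
e(\queue_t,\unvisited_t)\ \geq\ (d-\lambda)\,|\unvisited_t|\left(1-\frac{|\unvisited_t|}{n}\right).
\]

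The main obstacle, which accounts for the extra factor $d/(d-1)$ in the exponent, is obtaining a sufficiently sharp upper bound on $|\queue_t|$ in the denominator: the trivial bound $|\queue_t|\leq n-|\unvisited_t|$ would only yield $e^{-(d-\lambda)\pi}$. The leverage comes from Lemma~\ref{lemma:spanningtree}: the BFS spanning tree restricted to $\processed_t$ contains $|\processed_t|-1$ edges, so $e(\processed_t,\processed_t)\geq 2(|\processed_t|-1)$. Combined with the identity $e(\processed_t,\queue_t)=d|\processed_t|-e(\processed_t,\processed_t)$ and the observation that every vertex of $\queue_t$ has a parent in $\processed_t$ (whence $|\queue_t|\leq e(\processed_t,\queue_t)$), this yields $|\queue_t|\leq(d-1)|\processed_t|$, equivalently $|\queue_t|\leq \tfrac{d-1}{d}(n-|\unvisited_t|)$, for $t\geq 2$. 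Substituting this back into the decrease formula, the factors $(n-|\unvisited_t|)$ cancel and produce exactly
\[
\expectation\bigl(|\unvisited_t|-|\unvisited_{t+1}|\,\bigm|\,\processed_t,\queue_t,\unvisited_t\bigr)\ \geq\ \frac{c\,|\unvisited_t|}{n},
\]
which rearranges to the advertised contraction. The two boundary steps $t\in\{0,1\}$, which escape the bound $|\queue_t|\leq(d-1)|\processed_t|$, can be dispatched by direct inspection; they shift $\pi$ by only $O(1/n)$ and are absorbed by the looseness in $(1-c/n)^t\leq e^{-ct/n}$.
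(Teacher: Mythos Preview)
Your proof is correct and follows essentially the same route as the paper's: both establish the one-step contraction $\expectation(|\unvisited_{t+1}|)\leq(1-c/n)\,\expectation(|\unvisited_t|)$ by combining the Expander Mixing Lemma (you apply it to $S=T=\unvisited_t$, the paper to $S=\processed_t\cup\queue_t$, $T=\unvisited_t$, which for complementary sets gives the same inequality) with the bound $|\queue_t|\leq(d-1)|\processed_t|$, and then iterate. Your derivation of $|\queue_t|\leq(d-1)|\processed_t|$ via the spanning-tree edge count is more explicit than the paper's one-line assertion, and you are right to flag the boundary steps $t\in\{0,1\}$, which the paper glosses over.
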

\begin{proof}

As in the proof of Proposition \ref{prop:main}, instead of $\kappa$ we will consider the density $\upsilon$ of unvisited vertices at time $t$. With $\processed_t$, $\queue_t$, $\unvisited_t$, $\pi$, $\kappa$, $\upsilon$ as before we derive upper and lower bounds on $\upsilon$ as a function of $\pi$. 
At any step $t$ we have by the Expander Mixing Lemma that the number of edges between $\queue_t \cup \processed_t$ and $\unvisited_t$ satisfies
\[ e(\processed_t \cup \queue_t, \unvisited_t) \ \geq \ \frac{d \, |\processed_t \cup \queue_t| \, |\unvisited_t|}{n} - \lambda \cdot \sqrt{|\processed_t \cup \queue_t| \, |\unvisited_t| \left(1 - \frac{|\processed_t \cup \queue_t|}{n}\right)\left(1 - \frac{|\unvisited_t|}{n}\right)} \enspace .\]

As there are no edges between $\processed_t$ and $\unvisited_t$, this also serves as a lower bound for $e(\queue_t, \unvisited_t)$.
At step $t$, the expected number of vertices added to the queue at the next step equals
\begin{equation}
  e(\queue_t, \unvisited_t)/|\queue_t| \enspace .
\end{equation}
This expectation has the following lower bound
\begin{eqnarray*}
\frac{e(\queue_t, \unvisited_t)}{|\queue_t|} &\geq&  \frac{d \, |\processed_t \cup \queue_t| \, |\unvisited_t|}{|\queue_t| \, n} - \frac{\lambda}{n\,|\queue_t|} \sqrt{|\processed_t \cup \queue_t| \, |\unvisited_t| \, (n - |\processed_t \cup \queue_t|) \, (n - |\unvisited_t|)}\\
&=& \frac{d - \lambda}{n} |\unvisited_t| \left(1 + \frac{|\processed_t|}{|\queue_t|}\right) \enspace .
\end{eqnarray*}
Now $|\queue_t|$ clearly satisfies the upper bound $|\queue_t| \leq (d - 1)t = (d - 1)|\processed_t|$, thus we have 
\[ \frac{e(\queue_t, \unvisited_t)}{|\queue_t|} \ \geq \ \frac{d - \lambda}{n} |\unvisited_t| \left(1 + \frac{1}{d - 1}\right) \enspace .\]
Therefore, the expected number of vertices removed from $\unvisited_t$ at step $t + 1$ given the size of $\unvisited_t$ is at least $\frac{d - \lambda}{n} |\unvisited_t| \left(1 + \frac{1}{d - 1}\right)$.
Thus, we may bound $\expectation(|\unvisited_{t + 1}|)$ from above as follows:
\begin{eqnarray*}
\expectation(|\unvisited_{t + 1}|) &=& \sum_{u = 0}^n \expectation(|\unvisited_{t + 1}| \mid |\unvisited_t| = u)\Pr(|\unvisited_t| = u)\\
&\leq& \left(1 - \frac{d - \lambda}{n}\left(1 + \frac{1}{d - 1}\right)\right) \sum_{u = 0}^n u \Pr(|\unvisited_t| = u) \\
&=& \left(1 - \frac{d - \lambda}{n}\left(1 + \frac{1}{d - 1}\right)\right) \expectation(|\unvisited_t|) \enspace .
\end{eqnarray*}
Now as $\unvisited_0$ is always the full graph minus a single vertex we have that 
\begin{eqnarray*}
\expectation(|\unvisited_t|) &\leq& \left(1 - \frac{d - \lambda}{n}\left(1 + \frac{1}{d - 1}\right)\right)^t (n-1) \\
&\leq& \exp \left(-\left(d - \lambda\right)\left(1 + \frac{1}{d - 1}\right) \frac{t}{n} \right)(n - 1) \enspace .
\end{eqnarray*}
Thus we get 
\[ \expectation(\kappa) \ \geq \ 1 - \pi - \exp \left(-\left(d - \lambda\right)\left(1 + \frac{1}{d - 1}\right) \pi \right) \enspace . \qedhere\]
\end{proof}

If one wants to use our random growth process to estimate the size of a graph, the Expander Mixing Lemma may in some sense be reversed.
We make this formal as follows.
As in the Expander Mixing Lemma and in Theorem \ref{thm:main2}, the strongest results follow when the spectral gap $d - \lambda$ is as large as possible, that is, for a Ramanujan graph. 
\begin{theorem}\label{thm:VertexCounts}
  Let $G$, $d$, $n$, and $\lambda < d$ be as in the Expander Mixing Lemma.
  For a given step $t$ in the random growth process, let $\processed$ denote the vertices that have been processed, $\queue$ denote the queue, $\visited$ denote the visited vertices (that is, $\visited = \processed \sqcup \queue$) and $\unvisited$ denote the unvisited vertices.
  Then $n$ satisfies
  \[ \frac{(d - \lambda) \cdot |\visited|^2}{(d - \lambda)\cdot|\visited| - e(\unvisited, \visited)} \ \geq \ n \ \geq \ \frac{(d + \lambda)\cdot|\visited|^2}{(d + \lambda)\cdot|\visited| - e(\unvisited, \visited)} \enspace .\]
\end{theorem}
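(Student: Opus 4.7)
The plan is to apply the Expander Mixing Lemma directly to the pair $S = \visited$ and $T = \unvisited$, and then solve the resulting two inequalities for $n$. The key observation that makes everything clean is that $\visited$ and $\unvisited$ partition the vertex set, so $|\visited|/n + |\unvisited|/n = 1$. This means the factor $\left(1 - \tfrac{|\visited|}{n}\right)\left(1 - \tfrac{|\unvisited|}{n}\right)$ appearing in the square root of the Expander Mixing Lemma equals $\tfrac{|\unvisited|}{n} \cdot \tfrac{|\visited|}{n}$, so the whole square root collapses to $|\visited|\,|\unvisited|/n$. Thus, from the Expander Mixing Lemma, I will obtain the two-sided estimate
\[
\frac{(d-\lambda)\,|\visited|\,|\unvisited|}{n} \ \leq \ e(\unvisited,\visited) \ \leq \ \frac{(d+\lambda)\,|\visited|\,|\unvisited|}{n} \enspace .
\]

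Next I substitute $|\unvisited| = n - |\visited|$ into both inequalities and solve each for $n$. From the upper bound $e(\unvisited,\visited) \leq \tfrac{(d+\lambda)(n-|\visited|)\,|\visited|}{n}$, clearing the denominator and collecting terms gives $n\bigl[(d+\lambda)|\visited| - e(\unvisited,\visited)\bigr] \geq (d+\lambda)|\visited|^2$, which is exactly the claimed lower bound on $n$. Symmetrically, the lower bound on $e(\unvisited,\visited)$ rearranges to $n\bigl[(d-\lambda)|\visited| - e(\unvisited,\visited)\bigr] \leq (d-\lambda)|\visited|^2$, yielding the claimed upper bound on $n$.

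The one point requiring care, rather than a real obstacle, is verifying the sign of the denominators before dividing. For the lower bound on $n$, the quantity $(d+\lambda)|\visited| - e(\unvisited,\visited)$ is strictly positive whenever $\unvisited \neq \emptyset$, since the Expander Mixing Lemma already bounds $e(\unvisited,\visited) \leq (d+\lambda)|\visited|\,|\unvisited|/n < (d+\lambda)|\visited|$. For the upper bound on $n$, the denominator $(d-\lambda)|\visited| - e(\unvisited,\visited)$ is nonnegative by the same Expander Mixing computation applied to the lower bound on $e(\unvisited,\visited)$, and strictly positive as long as $\visited \neq \emptyset$, so the division preserves the inequality direction.

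In short, the proof is a one-shot application of the Expander Mixing Lemma followed by algebraic manipulation; there is no probabilistic argument and no appeal to the random process beyond the definitions of $\visited$, $\unvisited$, and $e(\unvisited,\visited)$. The theorem can therefore be viewed as a deterministic consequence of spectral expansion, valid at every step of the growth process.
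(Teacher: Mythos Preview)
Your approach is essentially identical to the paper's: apply the Expander Mixing Lemma to the partition $\{\visited,\unvisited\}$, observe that the square-root factor collapses to $|\visited|\,|\unvisited|/n$ because $1-|\visited|/n=|\unvisited|/n$, and then solve for $n$ using $|\unvisited|=n-|\visited|$. The paper phrases the last step as bounding $\upsilon=|\unvisited|/n$ and writing $n=|\visited|/(1-\upsilon)$, but this is the same algebra.

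One correction: your justification that $(d-\lambda)|\visited|-e(\unvisited,\visited)$ is always nonnegative is not right, and it does not follow from the Expander Mixing Lemma as you suggest. Early in the process nearly every edge incident to $\visited$ may land in $\unvisited$, so $e(\unvisited,\visited)$ can be close to $d|\visited|$, making this denominator negative. The paper does not address this point in its proof either, but it remarks immediately afterward that the theorem ``gives no finite upper bound early in the process''; the upper inequality should simply be read as vacuous whenever that denominator is nonpositive.
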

\begin{proof}
  Recall from \eqref{eq:gamma-delta-epsilon} that $\upsilon$ is the density of $\unvisited$.
  The Expander Mixing Lemma directly implies the following upper bound and lower bounds on $\upsilon$:
  \[ \frac{(d + \lambda)\cdot|\unvisited|\cdot|\visited|}{n|\queue|} \ \geq \ \frac{e(\unvisited, \visited)}{|\queue|} \ \geq \ \frac{(d - \lambda)\cdot|\unvisited|\cdot|\visited|}{n|\queue|} \enspace. \]
  Therefore,
  \[ \frac{e(\unvisited, \visited)}{(d + \lambda)\cdot|\visited|} \ \leq \ \upsilon \ \leq \ \frac{e(\unvisited, \visited)}{(d - \lambda)\cdot|\visited|} \enspace .\]
  Now from the definition of $\upsilon$, we have $|\visited| = |\processed| + |\queue| = n - \upsilon n$.
  This yields $n = {|\visited|}/{(1 - \upsilon)}$, and the bounds in the statement follow from the bounds on $\upsilon$. 
\end{proof}

\begin{figure}[tb]
\centering
\includegraphics{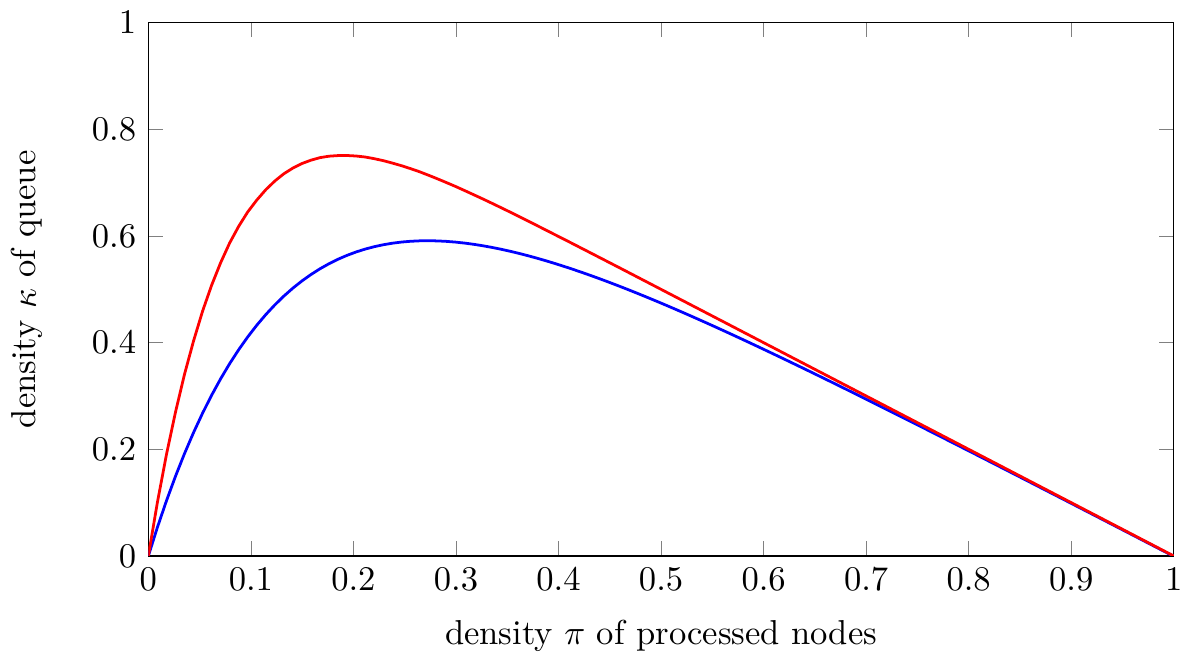}
\caption{Lower bound on the expected queue size from Theorem \ref{thm:main2} (blue) and a comparison to experimental data (red) for $\LPS(13,61)$.
}
\label{figure:RamanujanQueue}
\end{figure}

In our applications discussed in later sections often we will not know the value of $\lambda$ and our results described above will serve as heuristics to model what we see in our experiments.
However, before moving on to such applications, we describe a particular case where we do have a Ramanujan graph $G$. 

The celebrated 1986 paper of Lubotzky, Phillips, and Sarnak \cite[\S2]{LubotzkyPhillipsSarnak:1988} gave the first explicit construction of Ramanujan graphs.
For two distinct primes $p$ and $q$ both congruent to 1 mod 4, Lubotzky, Phillips, and Sarnak consider the Cayley graph of $p+1$ specially chosen generators of the projective linear groups $\PSL(2,\ZZ/q\ZZ)$ or $\PGL(2,\ZZ/q\ZZ)$, depending whether $p$ is a square modulo $q$ or not.
This yields a $(p + 1)$-regular Ramanujan graph, which we will denote $\LPS(p, q)$.
When $p$ is a square modulo $q$, the Ramanujan graph $\LPS(p, q)$ will be nonbipartite with exactly $(q^3 - q)/2$ vertices. 

For comparison with our results we ran Algorithm~\ref{algo:growth} on the Lubotzky--Phillips--Sarnak construction with $p = 13$ and $q = 61$.
Our reasoning for these values of $p$ and $q$ is that we want a Ramanujan graph which is of similar edge density to the flip graphs we consider later.
The resulting graph $\LPS(13,61)$ is 14-regular and has $113{,}460$ vertices. 
A \matlab \cite{MATLAB:2019a} computation shows that $\lambda \approx 7.1835$, which is smaller than $2\cdot \sqrt{13} \approx 7.2111$, and this confirms that $\LPS(13,61)$ is a Ramanujan graph.
In Figure~\ref{figure:RamanujanQueue} we show the comparison between the density of the queue throughout the search and the lower bound on the expected queue size described by Theorem \ref{thm:main2}.

\begin{figure}[tb]
\centering
\includegraphics{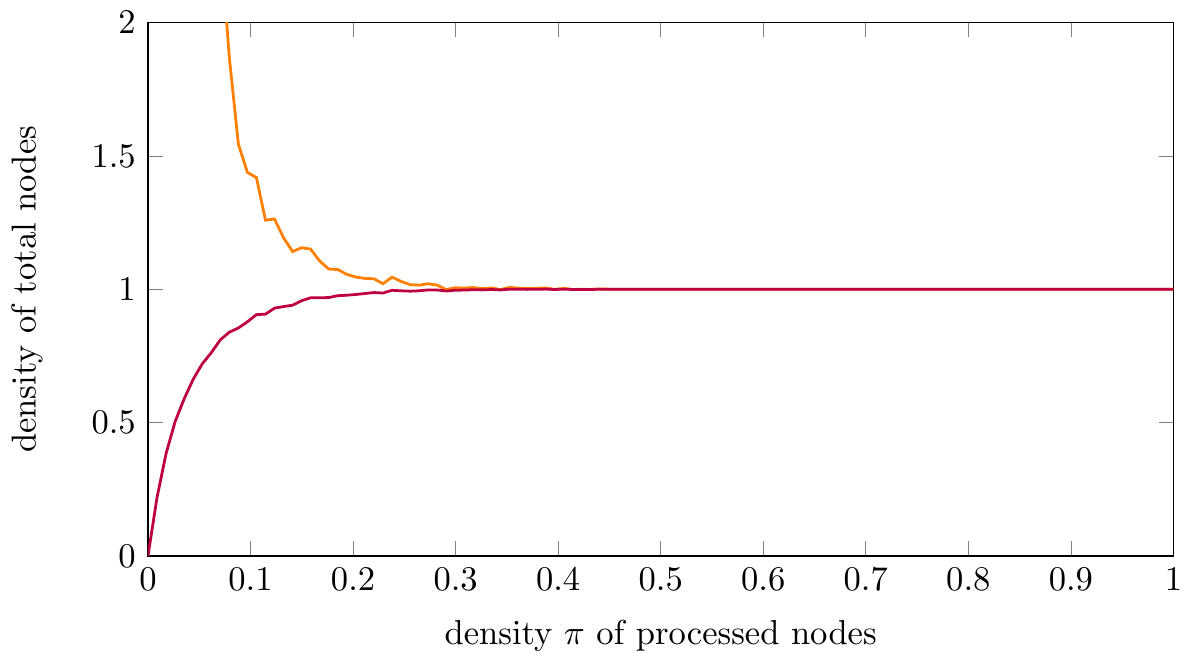}
\caption{
Upper and lower estimates on the total number of vertices (normalized by the actual number of vertices) of $\LPS(13,61)$.
}
\label{figure:RamanujanSize}
\end{figure}

Next, we consider how well Theorem \ref{thm:VertexCounts} does at predicting the size of the graph during the search process. For the experiment on $G = \LPS(13, 61)$, we start at a single node and run the random growth process. At every 1000 steps in the process we output the size of the queue and the number of processed nodes. Moreover, we want to estimate $e(\unvisited, \visited)$. This is done via a random sample. Observe that all edges from $\visited$ to $\unvisited$, have their $\visited$ endpoint contained $\queue$. Therefore to estimate $e(\unvisited, \visited)$, we sample 100 vertices from $\queue$ to estimate the average number of edges a vertex in $\queue$ sends to $\unvisited$.

For example, in the particular run we consider here, when there were 26{,}000 vertices processed, we had 84{,}102 vertices in the queue. We also know that the graph is 14-regular so at this point we have that \emph{with certainty} the number of vertices is at most
\[\frac{(14 - \lambda)(26{,}000 + 84{,}102)^2}{(14 - \lambda)(26{,}000+84{,}102) - e(\unvisited, \visited)} \enspace .\]

To estimate bounds however we need estimates on $\lambda$ and $e(\unvisited, \visited)$.
Since we have a Ramanujan graph $\lambda$ is at most $2\sqrt{13}$, and we take this as the estimate for~$\lambda$.
In this particular case a random sampling of size 100 from the vertices in the queue gave an estimate on $e(\unvisited, \visited)$ of $37{,}004.88$.
Thus the \emph{estimated} upper bound on the number of vertices is
\begin{equation}\label{eq:LPS-estimate-nodes}
  \frac{(14 - 2\sqrt{13})(26{,}000 + 84{,}102)^2}{(14 - 2\sqrt{13})(26{,}000 + 84{,}102) - 37{,}004.88} \ \approx \ 115{,}836.7 \enspace.
\end{equation}
Similarly, one estimates a lower bound on the number of vertices of $111{,}874.7$.
So with only about 23 percent of the vertices moved into the set $P$, we can give upper and lower bounds on the total number of vertices that are both within 3 percent of the right answer.

Figure \ref{figure:RamanujanSize} shows the curves giving the upper and lower estimates throughout the process.
Note that Theorem \ref{thm:VertexCounts} gives no finite upper bound early in the process, and the figure reflects this.

Of course, as we mentioned above, in this particular case we do know a precise approximation for $\lambda$.
Replacing $2\sqrt{13}$ by $7.1835$ in \eqref{eq:LPS-estimate-nodes} improves the estimated upper bound for the number of nodes of $\LPS(13,61)$ only slightly, to $115{,}812.27$.
However, in larger examples we would not be able to expect to compute $\lambda$ numerically, nor will that be the goal, so we want to instead focus on the coarser estimate here.

\section{Flip graphs of point configurations}\label{section:flip_graphs}

Let $\pointconf\subset\RR^\ell$ a finite set of $n$ points that affinely spans the entire space.
A \emph{triangulation} $\Sigma$ of $\pointconf$ is a simplicial complex which covers the convex hull $\conv \pointconf$, such that the vertices of each simplex form a subset of the given points $\pointconf$; cf.\ \cite[\S2.3.1]{Triangulations}.
The set of all subdivisions of $\pointconf$ is partially ordered by refinement, and the triangulations are precisely the finest subdivisions.
The triangulations form the nodes of a graph, where the edges arise from local modifications known as \emph{flips}; cf.\ \cite[Definition 2.4.7]{Triangulations} and Figure~\ref{fig:ngon}.
This is the \emph{flip graph} of $\pointconf$.

A certain class of triangulations is of particular interest, e.g., due to connections with algebra \cite[\S1.3]{Triangulations}.
The triangulation $\Sigma$ is \emph{regular} if it is induced by a height function $h:\pointconf\to\RR$ in the sense that the lower convex hull of
\[
\conv\bigl\{ (p,h(p)) \mid  p\in \pointconf \bigr\} \quad \subset \ \RR^{\ell+1}
\]
projects to $\Sigma$ by omitting the last coordinate.
The subgraph of the flip graph whose nodes correspond to the regular triangulations is the \emph{flip graph of regular triangulations} of $\pointconf$.
In the literature this is often called the \enquote{regular flip graph}; however, as we also talk about regularity in the graph-theoretic sense we make effort to avoid ambiguity between the two notions of regular. 
Structurally, it is essential that the flip graph of regular triangulations is contained in the vertex-edge graph of a convex polytope, the \emph{secondary polytope} of $\pointconf$, which is defined up to normal equivalence \cite[Theorem 5.3.1]{Triangulations}.
In particular, the flip graph of regular triangulations is necessarily connected \cite[Corollary 5.3.14]{Triangulations}; in general, this does not hold for the flip graph of all triangulations \cite[\S7.3]{Triangulations}.

For simplicity of the exposition we will now assume that the points in $\pointconf$ are in convex position, i.e., they form the vertices of their convex hull, $\conv\pointconf$.
In this way we can also afford some sloppiness by not distinguishing between a polytope and its set of vertices.

\begin{figure}[bh]
  \includegraphics[width=.9\textwidth]{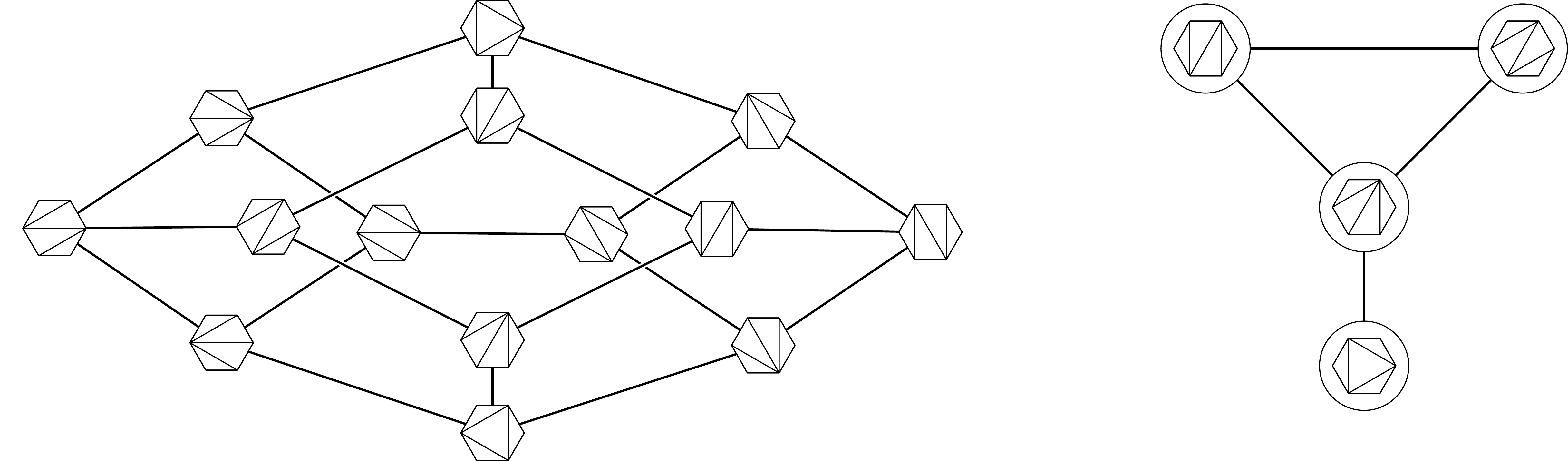}
  \caption{On the left: The flip-graph of a hexagon is the vertex-edge graph of the $3$-dimensional associahedron. On the right: The quotient flip graph for the cyclic group $C_6$ of rotations.}
  \label{fig:ngon}
\end{figure}


\begin{example}\label{exmp:ngon}
  Let $\pointconf\subset\RR^2$ be the set of vertices of a convex $k$-gon.
  This point configuration has $\tfrac{1}{k-1}\tbinom{2k-4}{k-2}$ triangulations, which is a Catalan number.
  Each triangulation is regular, and it is determined by its $k-2$ diagonals, and each one gives rise to a flip.
  This entails that the flip graph of $\pointconf$ is a regular graph of degree $k-2$ with $\tfrac{1}{k-1}\tbinom{2k-4}{k-2}$ nodes.
  The $(k{-}3)$-dimensional associahedron, which is simple, is a secondary polytope.
\end{example}

\begin{table}[bh]
\caption{Data on the flip-graphs of $k$-gons, for $k\geq 15$.
  Here $n$ is the number of nodes, i.e., triangulations, $m$ is the number of edges, i.e., flips, and $d$ is the degree.
  The small cases $k\in\{3,4\}$ are omitted since those flip graphs are bipartite (in fact, consisting of a single node for $k=3$, and a single edge for $k=4$).
}
\label{tab:flip-k-gon}
\begin{tabular}{crrrrrrrrrr}
\toprule
$k$         & 5        & 6       & 7      & 8      & 9       & 10        \\
\midrule
$n$            & 5        & 14      & 42     & 132    & 429     & 1{,}430   \\
$m$            & 5        & 21      & 84     & 330    & 1{,}287 & 5{,}005   \\
$d$                & 2        & 3       & 4      & 5      & 6       & 7         \\
$\lambda$          & 1.6180   & 2.4142  & 3.2320 & 4.3834 & 5.4885  & 6.5650    \\
$2\cdot\sqrt{d-1}$ & 2.0000   & 2.8284  & 3.4640 & 4.0000 & 4.4720  & 4.8988    \\
\toprule
$k$         & 11      & 12       & 13        & 14            & 15 \\
\midrule                     
$n$            & 4{,}862 & 16{,}796 & 58{,}786  & 208{,}012     & 742{,}900 \\
$m$            & 19{,}448& 75{,}582 & 293{,}930 & 1{,}144{,}066 & 4{,}457{,}400 \\
$d$                & 8       &  9       & 10        & 11            & 12\\
$\lambda$          & 7.6228  &  8.6678  & 9.7038    & 10.7331       & 11.7574\\
$2\cdot\sqrt{d-1}$ & 5.2914  &  5.6568  & 6.0000    & 6.3244        & 6.6332\\
\bottomrule
\end{tabular}
\end{table}

The spectral expansion of the flip graph of a polygon is unclear, although the weaker condition of low diameter is known; cf.\ \cite{SleatorTarjanThurston:1988} and \cite{Pournin:2014}.
Yet inspecting the values for $\lambda$ and $2\sqrt{d-1}$ in Table~\ref{tab:flip-k-gon} yields the following.
\begin{observation}
  The flip graph of a $k$-gon is a regular Ramanujan graph for $k\in\{5,6,7\}$.
\end{observation}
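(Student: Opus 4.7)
The plan is to verify the Ramanujan inequality $\lambda \leq 2\sqrt{d-1}$ directly, by explicitly computing the adjacency spectrum of each of the three flip graphs in question. By Example~\ref{exmp:ngon}, the flip graph of the $k$-gon is $(k-2)$-regular with a Catalan number of nodes, so for $k \in \{5,6,7\}$ it has only $5$, $14$, and $42$ vertices respectively. Each of these is small enough to diagonalize explicitly, so the statement reduces to a finite verification.

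For $k=5$ the flip graph is the $5$-cycle $C_5$, since it is a connected $2$-regular graph on $5$ vertices. Its eigenvalues are $\{2\cos(2\pi j/5) : j=0,1,\ldots,4\}$, and the second largest in absolute value is $|2\cos(4\pi/5)| = (1+\sqrt{5})/2$, the golden ratio. Since $2\sqrt{d-1} = 2 > (1+\sqrt{5})/2$, the Ramanujan inequality holds. (Incidentally, every odd cycle of length at least $3$ is a $2$-regular Ramanujan graph, so this case could also be subsumed in a more general remark.)

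For $k=6$ and $k=7$ the flip graph is the vertex-edge graph of the $3$- respectively $4$-dimensional associahedron. I would construct the adjacency matrix explicitly --- for instance, by enumerating triangulations via the bijection with sets of non-crossing diagonals of the $k$-gon and adding an edge for each diagonal flip --- and then compute the spectrum numerically, obtaining the values $\lambda \approx 2.4142$ and $\lambda \approx 3.2320$ already recorded in Table~\ref{tab:flip-k-gon}. Comparing with $2\sqrt{2} \approx 2.8284$ and $2\sqrt{3} \approx 3.4640$ verifies the claim in both remaining cases.

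The main obstacle is conceptual rather than technical: the argument amounts to a finite numerical computation, and I see no structural reason rooted in the combinatorics of the associahedron that would simultaneously explain why the Ramanujan bound holds precisely for $k \leq 7$ and why it fails so rapidly for $k \geq 8$, as the rest of Table~\ref{tab:flip-k-gon} shows. A uniform, non-computational proof --- perhaps exploiting the recursive product-like structure of associahedra or the action of the rotation group $C_k$ --- would be of independent interest, but the data suggest no clean pattern is likely to emerge, and the statement is best read as a computational observation rather than the shadow of a theorem.
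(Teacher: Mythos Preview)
Your proposal is correct and matches the paper's approach exactly: the Observation is presented there as a direct consequence of comparing the numerically computed values of $\lambda$ against $2\sqrt{d-1}$ in Table~\ref{tab:flip-k-gon}, with no further argument given. Your added remark that the $k=5$ flip graph is the $5$-cycle, with its spectrum known in closed form, is a pleasant elaboration but not a different method.
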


In Figure \ref{figure:15gon}, we show the results of applying Theorem \ref{thm:VertexCounts} on the random growth process applied to the flip graph $\Phi$ of the 15-gon.
We estimate $e(\unvisited, \visited)/|\queue|$ at every 1000 steps by a random sampling of 100 vertices from $\queue$. More importantly, rather than taking the actual value of $\lambda(\Phi)$, we wish to see what Theorem \ref{thm:VertexCounts} would tell us were $\Phi$ to have good expansion so we take $\lambda= 2\cdot\sqrt{11} \approx 6.633$ to use Theorem \ref{thm:VertexCounts} to give upper and lower estimates on the number of vertices of $\Phi$. This is compared with the true number of vertices, 742,900.
In Figure \ref{figure:15gon} the upper curve shows the ratio of the upper estimate of Theorem \ref{thm:VertexCounts} to the true number of vertices and the lower curve shows the same ratio for the lower estimate.
\begin{figure}[htb]
\centering
\includegraphics{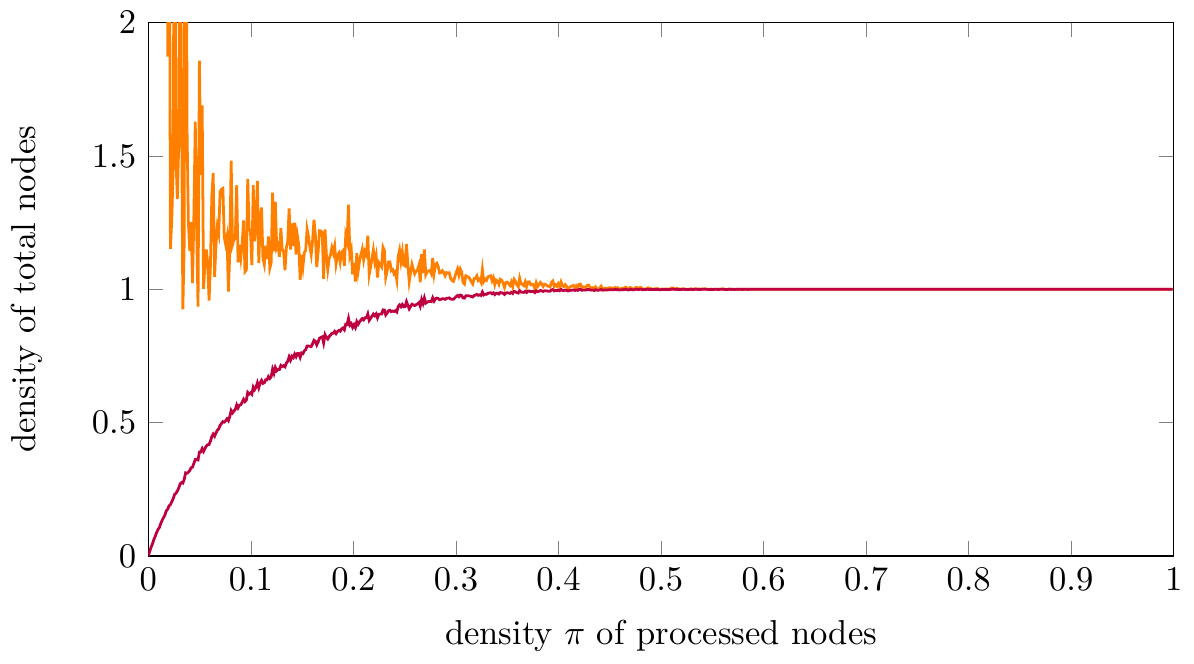}
\caption{
Upper and lower estimates on the total number of vertices of the flip graph of the 15-gon (normalized by the actual number of vertices)
}
\label{figure:15gon}
\end{figure}

We compare this figure back to the idealized setting of Figure \ref{figure:RamanujanSize}.
Here there is more volatility in the upper bound at the beginning and the two curves do not converge to each other quite as quickly. Still, the two curves become quite close to one another long before the process has finished.
Note that the flip graph of the 15-gon is 12-regular compared to the 14-regular $LPS(13, 61)$. 

Even when only 220,000 vertices have been processed in this particular run, there are 476,113 vertices waiting in the queue, the data predicts that the total number of vertices will be between 715,279 and 767,518. So with only 30 percent of the vertices processed and another 64 percent in the queue we already have the right number of vertices bounded between 96 percent and 103 percent of its true value.

Next we describe other polytopes whose flip graphs are regular graphs.
A \emph{split} of $\pointconf$ is a subdivision with exactly two maximal cells; it is necessarily regular \cite[Lemma 3.5]{HerrmannJoswig:2008}.
Splits are coarsest subdivisions, i.e., they yield minimal elements of the refinement poset, and thus they correspond to facets of any secondary polytope.
If, conversely, each coarsest subdivision is a split, then $P$ is called \emph{totally splittable}.
This is the case, e.g., when $\pointconf$ is the vertex set of a polygon: each diagonal defines a split; cf.\ Example~\ref{exmp:ngon} and Figure~\ref{fig:ngon}.
Note that every subdivision of a totally split point configuration is regular.
There is a full characterization of the totally split polytopes:

\begin{theorem}[{\cite[Theorem~9]{HerrmannJoswig:2010}}]\label{thm:totally-splittable}
  A polytope $\pointconf$ is totally splittable if and only if it has the same oriented matroid as a simplex, a crosspolytope, a polygon, a prism over a simplex, or a (possibly multiple) join of these polytopes.
\end{theorem}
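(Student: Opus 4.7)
The plan is to prove the biconditional in two stages. The sufficiency (``if'') direction is essentially a case-by-case verification, while the necessity (``only if'') direction requires a structural classification of oriented matroids and is the substantive part.

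For sufficiency, I would treat the five base classes separately and then handle joins. A simplex admits no non-trivial subdivision at all, so it is totally splittable vacuously. For a convex polygon, as recalled in Example~\ref{exmp:ngon}, every coarsest subdivision is induced by a single diagonal and is therefore a split. For a crosspolytope, the coarsest subdivisions arise from hyperplanes separating one antipodal pair from the remaining vertices and are splits. For a prism over a simplex, the coarsest subdivisions come from hyperplanes matching vertices of the top simplex with vertices of the bottom simplex. To pass from the base classes to their joins, one shows that subdivisions of a join $\pointconf_1 \ast \pointconf_2$ correspond to pairs of subdivisions of $\pointconf_1$ and $\pointconf_2$; hence a coarsest subdivision must use a coarsest subdivision of exactly one factor, and such subdivisions descend to splits.

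For necessity, I would recast total splittability as a property of the oriented matroid $\mathcal{M}$ of $\pointconf$, since both the hypothesis and the conclusion are matroid-theoretic. Coarsest subdivisions correspond to facets of the secondary polytope and can be read off from the circuit structure of $\mathcal{M}$; a split corresponds to a subdivision whose common wall lies in a single cocircuit hyperplane. The hypothesis thus becomes: every circuit supporting a coarsest subdivision induces only two maximal cells. I would then detect join decompositions directly at the level of the oriented matroid (by finding a partition of the ground set into independent affine flats), reducing to the ``irreducible'' case, and perform a case analysis by rank and number of points.

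The main obstacle is precisely this final classification step: for each oriented matroid not appearing on the list, one must exhibit a coarsest subdivision with three or more maximal cells, contradicting total splittability. The natural strategy is to locate a Radon partition that is not supported on a single hyperplane and extend it to an explicit non-split coarsest subdivision. The delicate part is cleanly separating the boundary cases between simplex, crosspolytope, polygon, and prism in low ranks, where several small oriented matroids share many features and must be distinguished by a careful analysis of which Radon partitions can be realized by a splitting hyperplane.
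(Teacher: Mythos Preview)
This theorem is not proved in the paper at all: it is quoted verbatim from \cite[Theorem~9]{HerrmannJoswig:2010} and used as a black box to derive the subsequent corollary about secondary polytopes of totally splittable polytopes. There is therefore no in-paper argument to compare your proposal against.

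That said, your outline is broadly in the spirit of how such classification results are obtained---reducing to the oriented matroid, handling joins, and then doing a case analysis in low rank---but the substantive content (the ``only if'' direction) lives entirely in the Herrmann--Joswig paper, not here. If you want to validate your sketch, you would need to consult that reference; the present paper offers no guidance on the proof beyond the statement itself.
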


This result has an immediate consequence on the associated flip graphs.

\begin{corollary}
  Let $\pointconf$ be the set of $k$ vertices of a totally splittable $\ell$-polytope.
  Then any secondary polytope is simple of dimension $k-\ell-1$.
  Consequently, the flip graph of regular triangulations of $\pointconf$ is $(k{-}\ell{-}1)$-regular.
\end{corollary}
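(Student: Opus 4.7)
The corollary has two parts: every secondary polytope of $P$ is simple of dimension $k - \ell - 1$, and the flip graph of regular triangulations is $(k-\ell-1)$-regular. Since the flip graph of regular triangulations is the vertex--edge graph of the secondary polytope, and since the edge graph of a simple $d$-polytope is $d$-regular, the second assertion follows from the first. The dimension $k - \ell - 1$ is classical for secondary polytopes of $k$ affinely spanning points in $\RR^\ell$ (see \cite[Theorem~5.3.1]{Triangulations}), so the main content is simplicity. A $d$-polytope is simple precisely when every vertex lies on exactly $d$ facets, and for the secondary polytope vertices correspond to regular triangulations while facets correspond to coarsest regular subdivisions. Under the totally splittable hypothesis every subdivision is regular and every coarsest subdivision is a split, so simplicity reduces to the counting statement
\[
  (\star) \quad \text{every triangulation of $P$ refines exactly $k - \ell - 1$ splits.}
\]

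The plan is to establish $(\star)$ via the classification in Theorem~\ref{thm:totally-splittable}. First, I verify the count directly for each atomic family: the simplex ($k = \ell + 1$, no splits, one triangulation, both sides equal $0$); the convex $k$-gon ($\ell = 2$, each triangulation has exactly $k - 3 = k - \ell - 1$ pairwise noncrossing diagonals, each of which is a split); the crosspolytope; and the prism over a simplex. Second, I show that the count is preserved under the join operation, using the identity $(k_1 + k_2) - (\ell_1 + \ell_2 + 1) - 1 = (k_1 - \ell_1 - 1) + (k_2 - \ell_2 - 1)$ together with the fact that triangulations and splits of a join $P_1 \ast P_2$ decompose compatibly into pieces on the two factors. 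Since oriented-matroid information determines the secondary polytope up to normal equivalence, it suffices to treat each family on its standard realization.

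The main obstacle is the verification of $(\star)$ for the crosspolytope and the prism over a simplex, where the splits are fewer and structurally more subtle than polygon diagonals and require an explicit enumeration. A cleaner unified route would exploit the general inequality that every vertex of a $d$-polytope lies on at least $d$ facets, which already yields the lower bound in $(\star)$ for free; one would then match this by an upper bound of the form \enquote{any set of pairwise compatible splits in a totally splittable $P$ has size at most $k - \ell - 1$}, using that compatible splits subdivide $P$ into a polyhedral complex whose combinatorics is controlled by the matroid structure. Such an argument, if available, would bypass the case analysis entirely.
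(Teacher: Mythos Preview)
Your reduction of simplicity to the counting statement $(\star)$ is sound, and your case-by-case plan via Theorem~\ref{thm:totally-splittable} would eventually succeed, but you have correctly identified that the crosspolytope and prism cases are not yet done---so as written the argument is incomplete.

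The paper sidesteps this entirely. Rather than counting splits through each triangulation, it invokes \cite[Remark~11]{HerrmannJoswig:2010}, which identifies the secondary polytope of a totally splittable polytope explicitly: it is a (possibly iterated) product of simplices, permutohedra, and associahedra. Each of these is a simple polytope, and a product of simple polytopes is simple; together with the standard dimension formula $k-\ell-1$ this finishes the proof in one line. In effect the paper upgrades the classification of totally splittable \emph{point configurations} (Theorem~\ref{thm:totally-splittable}) to a classification of their \emph{secondary polytopes}, and reads off simplicity from that. Your approach instead tries to verify one numerical consequence of simplicity case by case, which is more work and exactly the work you flagged as the obstacle. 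The cleaner unified route you speculate about at the end---matching the trivial lower bound by a compatibility upper bound---is essentially what the explicit identification of the secondary polytope accomplishes, just packaged differently.
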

\begin{proof}
  The dimension of the secondary fan modulo its lineality space equals $k-\ell-1$; cf.\ \cite[\S5.1.3]{Triangulations}.
  Now the claim follows from an inspection case by case.
  By \cite[Remark 11]{HerrmannJoswig:2010} the secondary polytopes of totally splittable polytopes are (possibly multiple) products of simplices, permutohedra, and associahedra.
\end{proof}


\section{Non-regular graphs}\label{sec:nonregular}
Most flip graphs are not regular.
Nonetheless, we aim for an analogue of Theorem~\ref{thm:main2} in the absence of regularity. 
In the most classic setting expander graphs are defined in terms of regular graphs. However in the literature there is a more general notion of spectral expansion for non-regular graphs in terms of the normalized adjacency matrix.
For our purposes we will keep the assumption that $G$ is a graph with no isolated vertices and let $D = D(G)$ denote the degree matrix of $G$, which is diagonal and invertible.
Then the normalized adjacency matrix of $G$ is defined as
\[N \ := \ D^{-1/2}AD^{-1/2} \enspace,\]
where $A$ is the usual adjacency matrix of $G$.
It is standard knowledge that the eigenvalues of $N$ fall in $[-1, 1]$, with the multiplicity of the eigenvalue 1 corresponding to the number of connected components of $G$; see, e.g., \cite{CG}.
We use $\mu_i$ to denote the eigenvalues of $N$ with $\mu_2 > \mu_3 > \cdots > \mu_n$, and we write $\mu := \mu(G)$ for the maximum of $\mu_2$ and $-\mu_n$. 

Before we justify comparing the flip graphs from our experiments to (non-regular) expander graphs, we present a non-regular version of the Expander Mixing Lemma. This will allow us to be more precise when analyzing more experiments in Section \ref{section:in_practice}. 
We first need to define the volume of a set of vertices in a graph. For a graph $G$ and a subset $U$ of the vertices of $G$, the \emph{volume} of $U$ is
\[\vol(U) \ := \ \sum_{v \in U} \deg(v) \enspace .\]
There are several versions of the Expander Mixing Lemma for non-regular graphs known that use this volume notion; see, e.g., \cite{CG}. 
However, none that we found gave precisely the formulation that we need for our purposes, so we give our own formulation here.
Our argument essentially follows the proof of the $d$-regular analogue found in \cite[Lemma 2.1]{AlonChung}.

\begin{proposition}\label{prop:generalExpMix}
  Let $G = (V, E)$ be a graph with $\mu = \mu(G)$.
  Then for any partition of $V$ into two nonempty set $S$ and $T$, one has
  \[\left| e(S, T) - \frac{\vol(S)\vol(T)}{\vol(V)} \right| \ \leq \ \mu \frac{\vol(S) \vol(T)}{\vol(V)} \enspace .\]
\end{proposition}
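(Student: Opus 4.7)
The plan is to mimic the classical proof of the Expander Mixing Lemma (as given for instance in \cite{AlonChung}), but replace the adjacency matrix $A$ by the normalized adjacency matrix $N=D^{-1/2}AD^{-1/2}$, and work with the \enquote{weighted} indicator vectors $D^{1/2}\chi_S$ and $D^{1/2}\chi_T$. The partition hypothesis $S\sqcup T = V$ will only be used at the very end, to turn the Cauchy--Schwarz bound into the clean right-hand side $\mu\cdot \vol(S)\vol(T)/\vol(V)$.

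First I would observe that $e(S,T)=\chi_S^{\top}A\,\chi_T$, and then insert $D^{1/2}D^{-1/2}$ on either side of $A$ so that, setting $x:=D^{1/2}\chi_S$ and $y:=D^{1/2}\chi_T$, one has $e(S,T)=x^{\top}N y$. Next I would identify the top eigenvector of $N$: a direct computation gives $N(D^{1/2}\mathbf{1})=D^{1/2}\mathbf{1}$, so the unit vector $v_1:=D^{1/2}\mathbf{1}/\sqrt{\vol(V)}$ is an eigenvector for the eigenvalue $\mu_1=1$. Complete $v_1$ to an orthonormal eigenbasis $v_1,\ldots,v_n$ of $N$ with eigenvalues $\mu_i$, and write $x=\sum_i \alpha_i v_i$, $y=\sum_i \beta_i v_i$.

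Then $x^{\top}Ny=\sum_i \mu_i \alpha_i\beta_i$. The crucial computation is that the leading Fourier coefficients evaluate to
\[
\alpha_1 \ = \ \langle D^{1/2}\chi_S,\,v_1\rangle \ = \ \frac{\vol(S)}{\sqrt{\vol(V)}}, \qquad \beta_1 \ = \ \frac{\vol(T)}{\sqrt{\vol(V)}},
\]
so the $i=1$ term contributes exactly $\vol(S)\vol(T)/\vol(V)$. Therefore
\[
e(S,T) - \frac{\vol(S)\vol(T)}{\vol(V)} \ = \ \sum_{i\geq 2}\mu_i\,\alpha_i\beta_i,
\]
and the Cauchy--Schwarz inequality bounds the absolute value by $\mu\sqrt{\sum_{i\geq 2}\alpha_i^{2}}\sqrt{\sum_{i\geq 2}\beta_i^{2}}$.

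Finally I would evaluate the tails. From $\|x\|^2=\sum_{v\in S}\deg(v)=\vol(S)$ and Parseval one gets $\sum_{i\geq 2}\alpha_i^{2}=\vol(S)-\vol(S)^{2}/\vol(V)=\vol(S)(1-\vol(S)/\vol(V))$, and analogously for $\beta$. This is the step where the partition hypothesis enters: because $T=V\setminus S$, we have $\vol(T)=\vol(V)-\vol(S)$, so $1-\vol(S)/\vol(V)=\vol(T)/\vol(V)$ and symmetrically. Consequently
\[
\sqrt{\textstyle\sum_{i\geq 2}\alpha_i^{2}}\cdot\sqrt{\textstyle\sum_{i\geq 2}\beta_i^{2}} \ = \ \sqrt{\frac{\vol(S)\vol(T)}{\vol(V)}}\cdot\sqrt{\frac{\vol(T)\vol(S)}{\vol(V)}} \ = \ \frac{\vol(S)\vol(T)}{\vol(V)},
\]
which plugged into the Cauchy--Schwarz estimate yields the claimed inequality. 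The only potential obstacle is this last simplification: for general (non-disjoint) pairs one would obtain a weaker bound of the form $\mu\sqrt{\vol(S)\vol(T)(1-\vol(S)/\vol(V))(1-\vol(T)/\vol(V))}$, so the statement really does require $S$ and $T$ to partition $V$ in order to absorb the square root into the product $\vol(S)\vol(T)/\vol(V)$.
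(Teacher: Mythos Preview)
Your proof is correct. Both your argument and the paper's rest on the same spectral fact---that anything orthogonal to $D^{1/2}\mathbf{1}$ is contracted by $N$ by a factor $\mu$---but the implementations differ. The paper builds a \emph{single} test vector $g$ with $g(v)=-\sqrt{\deg(v)}/\vol(S)$ on $S$ and $g(v)=\sqrt{\deg(v)}/\vol(T)$ on $T$, checks that $g\perp D^{1/2}\mathbf{1}$, and then computes the Rayleigh quotient $\langle Ng,g\rangle/\|g\|^{2}$ explicitly as $1-e(S,T)\cdot\vol(V)/(\vol(S)\vol(T))$; the bound $|\langle Ng,g\rangle|\le\mu\|g\|^{2}$ then gives the claim in one stroke. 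You instead keep the two weighted indicators $x=D^{1/2}\chi_S$ and $y=D^{1/2}\chi_T$ separate, expand the bilinear form $x^{\top}Ny$ in a full eigenbasis, and apply Cauchy--Schwarz to the tail. Your route has the mild advantage that before invoking the partition hypothesis it already delivers the general bound $\mu\sqrt{\vol(S)\vol(T)(1-\vol(S)/\vol(V))(1-\vol(T)/\vol(V))}$ valid for arbitrary $S,T$; the paper's single-vector trick is a bit more economical but is tied to $S\sqcup T=V$ from the outset, since the very construction of $g$ presupposes a bipartition.
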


\begin{proof}
Recall that $N = D^{-1/2}AD^{-1/2}$. Then $f: V(G) \rightarrow \RR$ defined by $f(v) = \sqrt{\deg(v)}$ is an eigenvector for the eigenvalue 1. For $U$ and $V$ given as in the statement let $g : V(G) \rightarrow \RR$ be defined by $g(v) = -\sqrt{\deg(v)}/\vol(S)$ if $v \in S$ and $g(v) = \sqrt{\deg(v)}/\vol(T)$ if $v \notin S$. Now $f$ is orthogonal to $g$ since 
\[
  \langle f, g \rangle \ = \ \sum_{v \in S} -\frac{\deg(v)}{\vol(S)} + \sum_{v \in T} \frac{\deg(v)}{\vol(T)} 
    \ = \ -\frac{\vol(S)}{\vol(S)} + \frac{\vol(T)}{\vol(T)} \ = \ 0 \enspace .
\]
It follows from the fact that $N$ is a symmetric matrix (and hence has an orthogonal eigenbasis) therefore that 
\[
  |\langle Ng, g \rangle| \ \leq \ \mu \norm{g}^2 \enspace .
\]
Also, $\norm{g}^2 = \tfrac{1}{\vol(S)} + \tfrac{1}{\vol(T)}$.
It remains to compute $\langle Ng, g \rangle$.
We have
\[
  \langle Ng, g \rangle \ = \ \langle D^{-1/2} A D^{-1/2} g, g \rangle \ = \ \langle AD^{-1/2} g, D^{-1/2} g \rangle \enspace .
\]
Let $g' = D^{-1/2} g$.
Thus $g'(v) = -\tfrac{1}{\vol(S)}$ if $v \in S$ and $g'(v) = \tfrac{1}{\vol(T)}$ if $v \in T$.
It is easy to check that
\[
  \langle A g', g' \rangle \ = \ 2 \sum_{ij \in E} g'(i)g'(j) \enspace;
\]
indeed this holds for any vector in $\RR^{|V|}$.
Now as an edge can either have both endpoints in $S$, both endpoints in $T$, or contribute to $e(S, T)$ we have
\[
  \begin{split}
    2 \sum_{ij \in E} g'(i) g'(j) \ &= \ \frac{e(S, S)}{\vol^2(S)} + \frac{e(T, T)}{\vol^2(T)} - \frac{2 e(S, T)}{\vol(S)\vol(T)} \\
    &= \ \frac{\vol(S) - e(S, T)}{\vol^2(S)} + \frac{\vol(T) - e(S, T)}{\vol^2(T)} - \frac{2e(S, T)}{\vol(S)\vol(T)} \\
    &= \ \frac{1}{\vol(S)} + \frac{1}{\vol(T)} - e(S, T) \left(\frac{1}{\vol^2(S)} + \frac{2}{\vol(S)\vol(T)} + \frac{1}{\vol^2(T)}\right) \\
    &= \ \frac{1}{\vol(S)} + \frac{1}{\vol(T)} - e(S, T) \left( \frac{1}{\vol(S)} + \frac{1}{\vol(T)} \right)^2 \\
    &= \ \norm{g}^2 \left( 1- e(S, T) \left(\frac{1}{\vol(S)} + \frac{1}{\vol(T)} \right) \right) \enspace .
  \end{split}
\]
Thus,
\[
  \left| 1- e(S, T) \left(\frac{1}{\vol(S)} + \frac{1}{\vol(T)} \right) \right| \ = \ \frac{\langle Ng,g\rangle}{\norm{g}^2} \ \leq \ \mu \enspace.
\]
Since $\dfrac{1}{\vol(S)} + \dfrac{1}{\vol(T)} = \dfrac{\vol(V)}{\vol(S) \vol(T)}$, the claim follows.
\end{proof}


As in the 15-gon example, we want to compare non-regular flip graphs to Ramanujan graphs. However, without regularity we need to specify what we mean by the term Ramanujan graph. There does already exist a notion of non-regular Ramanujan graphs, posed by Lubotzky in \cite{Lubotzky95} and discussed in \cite{BLM} stated in terms of the spectral radius of the universal covering tree of a graph. Here, we will not be so precise. Rather our assumption will be that the degree sequence of the graphs under consideration have small variance. This will allow us to draw a comparison to a regular Ramanujan graph.

In terms of the variance of the degree sequence we can obtain the following hybrid version of the Expander Mixing Lemma as a corollary to Proposition \ref{prop:generalExpMix}.
\begin{corollary}\label{cor:hybrid}
Let $G = (V, E)$ be a graph with $\mu = \mu(G)$ and so that $G$ has average degree $\overline{d}$, and the variance of its degree sequence is $\sigma^2$, then for any partition of $V$ into two sets $S$ and $T$ one has the following lower bound on $e(S, T)$:
\[
  e(S, T) \ \geq \ (1 - \mu) \frac{(\overline{d}|S| - \sigma \sqrt{|S|n})(\overline{d}|T| - \sigma \sqrt{|T| n})}{\overline{d}n} \enspace .
\]
\end{corollary}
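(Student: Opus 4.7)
The plan is to combine the lower bound already extracted from Proposition~\ref{prop:generalExpMix} with a Cauchy--Schwarz estimate that converts volumes into expressions involving only the average degree $\overline{d}$, the variance $\sigma^2$, and the cardinalities $|S|$, $|T|$.

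First, rewrite the lower bound from Proposition~\ref{prop:generalExpMix} as
\[
  e(S,T) \ \geq \ (1-\mu)\,\frac{\vol(S)\vol(T)}{\vol(V)} \enspace ,
\]
and note that by the handshake-type identity $\vol(V)=\sum_{v\in V}\deg(v)=\overline{d}\,n$, the denominator is already exactly $\overline{d}n$, which matches the statement. So everything reduces to lower-bounding $\vol(S)$ and $\vol(T)$ individually.

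Next, apply the Cauchy--Schwarz inequality to the vectors $(1)_{v\in S}$ and $(\overline{d}-\deg(v))_{v\in S}$: this yields
\[
  \left(\sum_{v\in S}(\overline{d}-\deg(v))\right)^{\!2} \ \leq \ |S|\sum_{v\in S}(\overline{d}-\deg(v))^2 \ \leq \ |S|\sum_{v\in V}(\overline{d}-\deg(v))^2 \ = \ |S|\,n\sigma^2 \enspace ,
\]
using the definition $\sigma^2=\tfrac{1}{n}\sum_{v\in V}(\deg(v)-\overline{d})^2$. Taking square roots and rearranging gives $\vol(S)=\overline{d}|S|-\sum_{v\in S}(\overline{d}-\deg(v))\geq \overline{d}|S|-\sigma\sqrt{|S|n}$. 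The same argument applied to $T$ gives $\vol(T)\geq \overline{d}|T|-\sigma\sqrt{|T|n}$.

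Finally, substitute these two lower bounds into the numerator of $(1-\mu)\,\vol(S)\vol(T)/\vol(V)$. The only subtlety (and the one place where I expect to have to be a little careful) is the monotonicity step: in order to chain the bounds $\vol(S)\geq \overline{d}|S|-\sigma\sqrt{|S|n}$ and $\vol(T)\geq \overline{d}|T|-\sigma\sqrt{|T|n}$ into a product inequality $\vol(S)\vol(T)\geq (\overline{d}|S|-\sigma\sqrt{|S|n})(\overline{d}|T|-\sigma\sqrt{|T|n})$, the right-hand factors must be nonnegative; otherwise the claimed bound is vacuous or even larger than the true value. But the inequality as stated is still formally valid in that degenerate regime (the right-hand side is then $\leq 0$, while $e(S,T)\geq 0$), so one may simply record that the estimate is meaningful only when $|S|,|T|\geq n\sigma^2/\overline{d}^{\,2}$ and otherwise holds trivially. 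This completes the derivation.
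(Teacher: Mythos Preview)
Your proof is correct and follows essentially the same route as the paper: invoke Proposition~\ref{prop:generalExpMix} to get $e(S,T)\geq (1-\mu)\vol(S)\vol(T)/\vol(V)$, use $\vol(V)=\overline{d}n$, and bound $|\overline{d}|S|-\vol(S)|\leq \sigma\sqrt{|S|n}$ by Cauchy--Schwarz (the paper isolates this last estimate as a separate Claim, applying Cauchy--Schwarz over $V$ with the indicator $\mathbf{1}_S$, which is equivalent to your version over $S$ followed by extending the sum). One small imprecision in your final paragraph: the assertion that the right-hand side is $\leq 0$ in the degenerate regime is not quite right when \emph{both} factors $\overline{d}|S|-\sigma\sqrt{|S|n}$ and $\overline{d}|T|-\sigma\sqrt{|T|n}$ are negative (which can happen only if $2\sigma^2>\overline{d}^2$), since then their product is positive; the paper does not address this edge case either and simply restricts attention, in the discussion following the corollary, to the regime where $\sigma$ is small compared to $\overline{d}$.
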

\begin{proof}
  The proof follows from Proposition \ref{prop:generalExpMix} and the subsequent claim.
\end{proof}
\begin{claim}
For $G$ as in the statement and $S$ any subset of the vertex set of $G$ one has $|\overline{d}|S| - \vol(S)| \leq \sigma \sqrt{|S|n}$. 
\end{claim}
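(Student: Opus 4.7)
The plan is to write $\overline{d}|S| - \vol(S)$ as a single sum over the vertices of $S$ and then bound it by Cauchy--Schwarz against the degree variance of the whole graph. Concretely, enumerate the degree sequence as $d_1,\dots,d_n$, so that $\overline{d} = \tfrac{1}{n}\sum_i d_i$ and $\sigma^2 = \tfrac{1}{n}\sum_i (d_i - \overline{d})^2$. Since $\vol(S) = \sum_{v \in S} \deg(v)$, the quantity to be bounded becomes
\[
  \overline{d}|S| - \vol(S) \ = \ \sum_{v \in S}\bigl(\overline{d} - \deg(v)\bigr) \enspace .
\]

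Next, I would apply the Cauchy--Schwarz inequality to the inner product of the all-ones vector on $S$ with the vector of centered degrees $\overline{d} - \deg(v)$ restricted to $S$. This yields
\[
  \left| \sum_{v \in S}\bigl(\overline{d} - \deg(v)\bigr) \right| \ \leq \ \sqrt{|S|} \cdot \sqrt{\sum_{v \in S}\bigl(\overline{d} - \deg(v)\bigr)^2} \enspace .
\]
The second factor is then enlarged by extending the sum from $S$ to all of $V$, which is legitimate because every summand is non-negative. The resulting sum equals $n \sigma^2$ by definition of the variance of the degree sequence, so
\[
  \left| \overline{d}|S| - \vol(S) \right| \ \leq \ \sqrt{|S|} \cdot \sqrt{n \sigma^2} \ = \ \sigma \sqrt{|S| n} \enspace ,
\]
which is the claimed inequality.

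There is no real obstacle here; the argument is a two-line application of Cauchy--Schwarz combined with the observation that extending a sum of squares to a larger index set only increases it. The only care required is to recognize that the identity $\overline{d}|S| - \vol(S) = \sum_{v \in S}(\overline{d} - \deg(v))$ rewrites the discrepancy as a deviation-from-mean statistic, which is exactly what makes $\sigma$ the natural quantity to bound it by.
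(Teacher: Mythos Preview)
Your proof is correct and essentially identical to the paper's: both rewrite $\overline{d}|S| - \vol(S)$ as $\sum_{v \in S}(\overline{d} - \deg(v))$ and then apply Cauchy--Schwarz against the full degree-variance sum $\sum_{v \in V}(\overline{d} - \deg(v))^2 = n\sigma^2$. The only cosmetic difference is that the paper introduces the indicator function $\mathbf{1}_S$ to extend the sum to all of $V$ before applying Cauchy--Schwarz, whereas you apply Cauchy--Schwarz on $S$ first and then enlarge the resulting sum of squares; the two routes yield the same bound.
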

\begin{proof}[Proof of claim]
Let $G$ and $S$ be given as above. We have,
\begin{eqnarray*}
\left(\overline{d}|S| - \vol(S)\right)^2 &=& \left(\sum_{v \in S} (\overline{d} - \deg(v)) \right)^2 \\
&=& \left( \sum_{v \in G} (\overline{d} - \deg(v)) \textbf{1}_S(v) \right)^2,
\end{eqnarray*}
where $\textbf{1}_S$ denotes the indicator function of $S$. Thus by the Cauchy--Schwarz inequality the above quantity is at most
\begin{eqnarray*}
\sum_{v \in G} (\overline{d} - \deg(v))^2 \sum_{v \in G} (\textbf{1}_S(v))^2 &=& \sigma^2 n |S| \enspace.
\end{eqnarray*}
And the claim follows. 
\end{proof}
It follows from the statement of Corollary \ref{cor:hybrid} that for $\overline{d}$ much larger than $\sigma^2$ and $|S|$ and $|T|$ both far enough away from 0 we essentially have the regular Expander Mixing Lemma with the degree of regularity replaced by the average degree of the graph. The requirement that $|S|$ and $|T|$ both be far away from zero (where ``far away" is determined by how close $\sigma$ is to zero) effectively creates two blind spots in a predicted lower bound. We don't make a precise statement in terms of $\sigma^2$ here, but rather point out that Corollary \ref{cor:hybrid} together with the proof of Theorem \ref{thm:main2} gives us a heuristic explanation for applying Theorem \ref{thm:main2} with non-integer values of $d$. To be more precise we denote by $\beta(\pi, d, \lambda)$ the expression in the lower bound of Theorem \ref{thm:main2}, 
\[ \beta(\pi, d, \lambda) \ = \ 1 - \pi - \exp \left( -\left(d - \lambda\right)\left(1 + \frac{1}{d - 1}\right) \pi\right) \enspace. \]
In the next section, we make an experimental comparison between the behavior of the queue of a non-regular flip graph with average degree $\overline{d}$ and the behavior of the queue size modeled by $\beta(\pi, \overline{d}, 2 \sqrt{\overline{d} - 1})$

Recall that, by the Alon--Boppana Theorem, the threshold $2 \sqrt{d - 1}$ is essentially the smallest possible value for $\lambda(G)$ when $G$ is a $d$-regular graph.
Similarly, it follows from a result of Mohar \cite{Mohar} that $\lambda(G) = 2\sqrt{\overline{d} - 1}$ is the best one can hope for as far as the spectral gap of a non-regular graph is concerned.



\section{Random Graphs}
To complete our theoretical analysis of the random growth process in Algorithm~\ref{algo:growth} we now study its behavior on random graphs in the classical Erd\H{o}s--R\'{e}nyi model \cite{ErdosReyni:1960}.
This is particularly interesting in view of a result of Friedman \cite{Friedman:1991} which relates Ramanujan graphs with random graphs and because of which Ramanujan graphs are sometimes called \enquote{quasirandom}.
Note, however, that Friedman's random graphs are uniformly sampled from the class of all regular graphs.
The Erd\H{o}s--R\'{e}nyi random graphs, which we denote $G(n,p)$, form an easier model to sample from and are more commonly studied.
The random graph $G(n,p)$ has $n$ vertices and each of the $\tbinom{n}{2}$ edges appearing independently with probability~$p$.

In the previous section we looked at non-regular graphs of average degree $d$.
This suggests to compare with $G \sim G(n, d/n)$ as the expected degree of any fixed vertex under this distribution is $\tfrac{d}{n}(n - 1)$.
Before we will do this comparison a few words about the asymptotic behavior of $G \sim G(n, d/n)$ are in order.
More background on Erd\H{o}s--R\'{e}nyi random graphs may be found in, e.g., \cite{AS},  \cite{BollobasRandomGraphs}, or \cite{JansonLuczakRucinski}.
The first thing to mention is that $G \sim G(n, d/n)$ is asymptotically almost surely \emph{disconnected} if $n$ is large.

\begin{theorem}[Erd\H{o}s--R\'{e}nyi \cite{ErdosReyni:1960}]\label{ERPhaseTransition}
  Let $d$ be a fixed constant.
  If $d < 1$, then asymptotically almost surely $G \sim G(n, d/n)$ is a disjoint union of components of order $O(\log n)$.
  However, if $d > 1$, then asymptotically almost surely $G \sim G(n, d/n)$ has a unique \emph{giant component} on $(1 + o(1)) \delta_0 n$ vertices, where $\delta_0$ is the unique root of $1 - x = e^{-dx}$ in the open interval $(0, 1)$.
  In the latter case the remaining components have order $O(\log n)$.
\end{theorem}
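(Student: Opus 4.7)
The plan is to analyze the component containing a fixed vertex $v$ via an exploration process (a breadth-first search that reveals edges only when it queries them) and compare this to a Galton--Watson branching process with offspring distribution $\Bin(n-1, d/n)$, which for large $n$ is close to Poisson with mean $d$. In the exploration, at each step we maintain sets of active, discovered, and unseen vertices; we pick an active vertex, reveal its edges to the unseen vertices (each present independently with probability $d/n$), and move those neighbors from unseen to active. The number of children at step $t$ is $\Bin(n - s_t, d/n)$, where $s_t = O(t)$ counts already discovered vertices, so as long as $t = o(n)$ the offspring distribution is essentially the unchanged $\Bin(n, d/n)$.

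For the subcritical case $d<1$, I would dominate the exploration by a Galton--Watson tree $T$ with offspring $\Bin(n, d/n)$, whose mean is strictly less than $1$. Standard exponential tail bounds for the total progeny of a subcritical branching process give $\Pr[|T|\ge k]\le e^{-c(d)k}$ for some $c(d)>0$; taking $k = K\log n$ with $K$ large and applying a union bound over the $n$ possible starting vertices yields that a.a.s.\ every component has size $O(\log n)$.

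For the supercritical case $d>1$, the plan has three parts. First, existence of a large component: let $\rho$ be the survival probability of the $\mathrm{Pois}(d)$ branching process, so $1-\rho=e^{-d\rho}$, which identifies $\rho$ with $\delta_0$. An exploration from $v$ either dies out quickly (probability tending to $1-\delta_0$, with the same subcritical tail bound giving size $O(\log n)$ in that event) or reaches a ``breakout'' size such as $\log^2 n$. Once past breakout, martingale/Azuma concentration of the exploration (treating the number of unseen neighbors revealed at each step as a bounded-difference function) shows the process continues at effective mean $d(1-s_t/n) > 1$ until $s_t/n$ approaches $\delta_0$, at which point offspring mean drops to $1$ and the process stops; concentration pins the final size at $(1+o(1))\delta_0 n$. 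Second, small components: repeating the subcritical argument on the event of early extinction (where effective offspring mean is $<1$) bounds all non-giant components by $O(\log n)$. Third, uniqueness of the giant: I would use a sprinkling argument, writing $G(n,d/n)$ as the union of $G(n,d'/n)$ and an independent $G(n,\epsilon/n)$ with $d'+\epsilon$ matching $d$ up to lower order. The first sample already produces components of linear size; the independent sprinkled edges then a.a.s.\ merge any two such components into one, since each pair of linear-sized vertex sets spans $\Omega(n)$ potential edges and the probability of having zero sprinkled edges between them is $(1-\epsilon/n)^{\Omega(n^2)} = e^{-\Omega(n)}$.

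The main obstacle is the supercritical existence and concentration step: the branching process approximation becomes invalid once a constant fraction of vertices has been discovered, so one must either (a) stop the exploration at a sub-linear threshold and restart a concentration argument for the ``bulk'' phase, or (b) analyze the exploration directly as a Markov chain whose drift changes with $s_t/n$, and apply a differential-equation-style or Azuma concentration argument to the random function $t\mapsto s_t$. Getting the correct constant $\delta_0$ out of this analysis, rather than just the qualitative fact that the giant has linear size, is the delicate step; uniqueness via sprinkling and the $O(\log n)$ bound on other components are then relatively standard consequences.
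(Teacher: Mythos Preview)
The paper does not prove this theorem; it is quoted as a classical result of Erd\H{o}s and R\'{e}nyi, with a pointer to \cite[Chapter~9]{AS} for details. There is therefore no proof in the paper to compare your proposal against.

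That said, your sketch is the standard modern treatment (exploration process coupled to a Poisson Galton--Watson tree, subcritical tail bound plus union bound for the $O(\log n)$ claim, breakout-and-drift analysis for the giant, and sprinkling for uniqueness), and it is essentially the argument one finds in the references the paper cites, e.g.\ Alon--Spencer or Janson--{\L}uczak--Ruci\'nski. Your identification of the concentration of the giant's size at $(1+o(1))\delta_0 n$ as the delicate step is accurate; the cleanest way to get the exact constant is, as you suggest in option~(b), to track the exploration as a Markov chain and show that the fraction of unexplored vertices follows the ODE $u' = -d u$ up to $o(1)$ fluctuations, so the queue density obeys $1 - \pi - e^{-d\pi}$ and vanishes precisely at $\pi = \delta_0$. (Note that the paper itself carries out a closely related computation in Lemma~\ref{lem:numeric} and Theorem~\ref{randomgraph}, though for a different purpose.)
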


This significant change in behavior at $d = 1$ is called the \emph{Erd\H{o}s--R\'{e}nyi phase transition} and has been extensively studied; details may be found in \cite[Chapter~9]{AS}.

Here we briefly want to look into the connection with Ramanujan graphs.
The Expander Mixing Lemma tells us that for $G$ a $d$-regular graph and any subsets $S$ and $T$ of the vertices, $e(S, T)$ is close to $\tfrac{d}{n}|S||T|$.
The term $\tfrac{d}{n}|S||T|$ is the expected number of edges between $S$ and $T$ if $G$ is taken to be an Erd\H{o}s--R\'{e}nyi graph distributed as $G(n, d/n)$.
Thus a Ramanujan graph has roughly the number of edges between $S$ and $T$ for any $S$ and $T$ as would be expected in a random graph of the same edge density, where \enquote{roughly} is measured by the spectral gap.
While Theorem~\ref{thm:main2} holds for a deterministic graph, here we add a level of randomness in running our random growth process on a random graph. 

Now we investigate our random growth process on an Erd\H{o}s--R\'{e}nyi random graph.
We should be a bit careful now that we have two levels of randomness, namely the graph and the random selection of vertices from the queue.
Note that we will only be interested in the case where the giant component exists.
We consider the following experiment for any $n \in \NN$ and $d > 1$:
\begin{algorithm}[H]
  \caption{Random Growth on a Random Graph}
  \label{algo:erdos-renyi}
  \begin{algorithmic}[1]
    \State Generate a graph $G$ uniformly at random from $G(n, d/n)$.
    \State Pick a vertex $v_1$ uniformly at random from $G$.
    \State Run Algorithm~\ref{algo:growth} starting at $v_1$, and return the tree described by Lemma \ref{lemma:spanningtree}.
  \end{algorithmic}
\end{algorithm}

By Theorem \ref{ERPhaseTransition}, we know that, as $n$ tends to infinity, the density of the final tree will either be $\delta_0$ with probability $\delta_0$, or it will be zero with probability $1 - \delta_0$, because with probability $1 - \delta_0$ we will pick a component on $O(\log n)$ vertices.
As we are interested in searching the giant component, we consider Algorithm~\ref{algo:erdos-renyi} conditioned on $v_1$ being a vertex in the giant component.
Moreover, to simplify the exposition, we let this process run to step $n$, with nothing happening after the queue first becomes empty.
In this way we do not have to concern ourselves with conditioning on the exact size of the giant component.
Even if we let Algorithm~\ref{algo:erdos-renyi} run for $n$ steps, we keep track of the first time $t$, where $Q_t = \emptyset$.
Then $P_t$ is the component of $v_1$ and $U_t$ comprises the vertices in the complement.
That is, $t=|P_t|$, and we define $P_r = P_t$, $Q_r = Q_t$, and $U_r = U_t$ for all $t < r \leq n$.
With this notation we are prepared to prove the following theorem about the growth of the queue in a random search of the giant component of a random graph.

\begin{theorem} \label{randomgraph}
  Fix $d > 1$ and let $H \subseteq G \sim G(n, d/n)$ be a largest component in the random graph in the Erd\H{o}s--R\'{e}nyi model.
  We run Algorithm~\ref{algo:growth} on $H$ starting at a vertex $v_1$ in $H$, with the modification to let it run for $n$ steps.
  Then for any $\pi \in (0, \delta_0)$ the asymptotic expected density of $Q_t$, where $t = \pi n$, equals $1 - \pi  - \exp(-d \pi)$.
  If, however, $\pi \in (\delta_0,1]$ the asymptotic expected density of $Q_t$ equals $0$.
\end{theorem}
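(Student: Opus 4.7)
The plan splits into two cases according to whether $\pi$ exceeds $\delta_0$ or not. The case $\pi \in (\delta_0, 1]$ is almost immediate from Theorem~\ref{ERPhaseTransition}: with probability tending to $1$ the giant component $H$ has $(1+o(1))\delta_0 n$ vertices, and since Algorithm~\ref{algo:growth} starting at $v_1\in H$ explores exactly $H$, after $t = \pi n > \delta_0 n$ steps every vertex of $H$ has been processed. Thus $Q_t = \emptyset$ with high probability, and $\expectation(|Q_t|/n) \to 0$.

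For the interesting case $\pi \in (0, \delta_0)$ my plan is to exploit the \emph{principle of deferred decisions}. At the outset no edges of $G\sim G(n,d/n)$ have been queried; only when Algorithm~\ref{algo:growth} pops a vertex $v$ from $Q$ do we reveal the edges between $v$ and the current set $U$. Conditional on the history up to that moment these edges are independent $\text{Bernoulli}(d/n)$, so as long as $Q_t\neq\emptyset$ the decrement $|U_t|-|U_{t+1}|$ is distributed as $\Bin(|U_t|,d/n)$, and hence
\[
\expectation(|U_t|) \ = \ (n-1)\bigl(1-\tfrac{d}{n}\bigr)^t \ \longrightarrow \ n\,e^{-d\pi}
\]
for $t=\pi n$, provided the algorithm is still active at step $t$.

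The next step would be to confirm that the algorithm really is active up to step $\pi n$ with high probability, conditional on $v_1 \in H$: since the explored component is exactly $H$ and $|H|>\pi n$ whp by Theorem~\ref{ERPhaseTransition}, the queue has not yet emptied. Concentration of $|U_t|$ around its mean I would obtain via a Doob martingale with bounded differences and the Azuma--Hoeffding inequality, yielding $|U_t|/n \to e^{-d\pi}$ in probability. Combining with $|P_t|=t$ and $|Q_t| = n - |P_t| - |U_t|$, the bounded convergence theorem then gives $\expectation(|Q_t|/n) \to 1 - \pi - e^{-d\pi}$.

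The main technical obstacle is reconciling the conditioning on $v_1 \in H$ with the deferred-decisions analysis, which is most naturally performed in the unconditioned probability space. The resolution relies on the observation that, for $\pi < \delta_0$, the event ``Algorithm~\ref{algo:growth} reaches step $\pi n$ with $Q$ still nonempty'' is precisely the event that $v_1$ lies in a component of size at least $\pi n$, and by Theorem~\ref{ERPhaseTransition} this coincides with $\{v_1\in H\}$ up to an event of probability $o(1)$. Combined with the uniform bound $|Q_t|/n \in [0,1]$, this equivalence transfers the unconditional in-probability limit to the conditional expectation and completes the argument.
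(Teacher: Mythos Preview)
Your proposal is correct and follows essentially the same route as the paper: the deferred-decisions analysis is exactly the paper's auxiliary numeric process (Lemma~\ref{lem:numeric}), and your key observation that ``$Q$ still nonempty at step $\pi n$'' coincides with ``$v_1$ lies in a component of size at least $\pi n$'' is precisely how the paper transfers the unconditional computation to the conditional statement via Theorem~\ref{ERPhaseTransition}. Your explicit invocation of concentration (Azuma--Hoeffding) and bounded convergence is a welcome addition of rigor where the paper argues more informally, but the architecture of the proof is the same.
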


Before we enter the proof, let us explain how the analysis of Algorithm~\ref{algo:growth} from Theorem~\ref{randomgraph} applies to Algorithm~\ref{algo:erdos-renyi}.
Since initially all vertices look alike, we do not know if $v_1$ is actually contained in the giant component.
However, this is easy to fix by keeping track of the sizes of the sets $P$, $Q$ and $U$ during in the random growth process.
This leads to a numeric process by defining $u_t$ and $q_t$ for $t \in \NN$, as
\begin{equation}\label{eq:numeric}
  \begin{aligned}
    u_t &= u_{t - 1} - X \,, \text{ where } X \sim \Bin(u_{t - 1}, d/n) \,, \quad u_0 = n \\ q_t &= n - t - u_t \enspace.
  \end{aligned}
\end{equation}
Of course, $q_t$ will with probability 1 eventually become negative.
The important observation is that from the start to the first time $t \in (0, n)$ where $q_t = 0$, these sequences measure the size of the sets in an instance of Algorithm~\ref{algo:erdos-renyi}.
We begin with the following simple lemma about this numeric process.

\begin{lemma}\label{lem:numeric}
  In the numeric process \eqref{eq:numeric} the asymptotic expected value of $u_t/n$ equals $\exp(-d \upsilon)$ for $t = \pi n$, where $\pi \in (0, 1]$ is fixed, and $n$ tends to infinity.
\end{lemma}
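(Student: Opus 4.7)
The plan is to exploit linearity of expectation together with the elementary fact that a $\Bin(m, d/n)$ random variable has mean $md/n$. (I will interpret the target as $\exp(-d\pi)$, which is consistent with the statement of Theorem~\ref{randomgraph}.)

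First, I would condition on $u_{t-1}$. The defining recursion $u_t = u_{t-1} - X$ with $X \sim \Bin(u_{t-1}, d/n)$ immediately gives $\expectation[u_t \mid u_{t-1}] = u_{t-1}(1 - d/n)$. Taking a further expectation via the tower property yields the deterministic linear recurrence $\expectation[u_t] = (1 - d/n)\,\expectation[u_{t-1}]$, which together with the initial condition $u_0 = n$ iterates in one line to
\[
  \expectation[u_t] \ = \ n\,(1 - d/n)^t \enspace .
\]

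Next, I would substitute $t = \pi n$ and divide by $n$ to obtain $\expectation[u_t/n] = (1 - d/n)^{\pi n}$. Passing to the limit as $n \to \infty$ via the classical identity $(1 - d/n)^n \to e^{-d}$ produces the claimed asymptotic value $\exp(-d\pi)$. If $\pi n$ is not an integer, one instead takes $t = \lfloor \pi n \rfloor$; the resulting multiplicative discrepancy is of the form $(1-d/n)^{O(1)} \to 1$ and is therefore negligible.

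There is essentially no obstacle in this argument: the binomial thinning structure of the process makes the expectation multiplicative, and the whole statement collapses to a single elementary limit. The real content of the lemma lies not in its proof but in its \emph{use}: it isolates the branching-process heuristic that the unvisited set shrinks like $n e^{-d\pi}$, which is precisely what the subsequent analysis leading to Theorem~\ref{randomgraph} will couple with the actual Erd\H{o}s--R\'enyi exploration process.
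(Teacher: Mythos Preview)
Your argument is correct. The paper arrives at the same formula $\expectation(u_t) = n(1-d/n)^t$ by a slightly different route: rather than iterating the tower property, it identifies the full distribution of $u_t$ as $\Bin\bigl(n,(1-d/n)^t\bigr)$, by observing that each of the $n$ initial vertices survives all $t$ independent thinning rounds with probability $(1-d/n)^t$. Your recursive computation of the mean is arguably the more direct path to what the lemma actually asserts; the paper's approach buys a little more, namely the exact distribution of $u_t$, though this extra information is not used downstream. Either way the asymptotic step is the same elementary limit you wrote.
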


\begin{proof}
  From the definition of $\{u_t\}_{t \geq 1}$, we have that $u_t$ is distributed as a binomial random variable with $n$ trials and success probability $\left(1 - \frac{d}{n}\right)^t$.
  Indeed, we may sample $u_t$ by the following experiment: Begin with a set of $n$ vertices, and at each step in $1,\ldots,t$ choose a random set by including each vertex with independently probability $d/n$.
  This distribution induces the binomial distribution $\Bin(u_{t - 1}, d/n)$ at step $t$ on the vertices that have not yet been selected.
  Thus $u_t$ is given by the number of vertices that are never picked in this process.
  From this we have that $\expectation(u_t) = (1 - d/n)^{\upsilon n} n$.
  Thus the expected value of $u_t/n$ is asymptotically $\exp(-d \upsilon)$.
\end{proof}

Now we will prove Theorem~\ref{randomgraph}, which is about Algorithm~\ref{algo:growth}, by analyzing Algorithm~\ref{algo:erdos-renyi} and using Lemma~\ref{lem:numeric}.

\begin{proof}[Proof of Theorem~\ref{randomgraph}]
  From Lemma~\ref{lem:numeric} it follows that the expected value of $q_t/n$ at $t = \pi n$ is $1 - \pi - \exp(-d \pi)$.
  Of course, this only says something about Algorithm~\ref{algo:erdos-renyi} in the case that $q_r \geq 0$ for all $r \leq t$.
  The key observation is that after Steps 1 and~2 the component $H$ of the chosen vertex $v_1$ is deterministic, and the graph $H$ is connected.
  Thus for $\pi \in (0, 1)$, we have $|Q_r| > 0$ for all $r \leq \pi n$ if and only if $v_1$ lies in a component of $G \sim G(n, p)$ of order at least $\pi n$.
  By Theorem \ref{ERPhaseTransition}, the probability of this event, asymptotically in $n$ for $\pi$ fixed, is $\delta_0$ for $\pi < \delta_0$ and zero for $\pi > \delta_0$.
  From this it follows that, with probability $\delta_0$, the value $q_s$ measures the size of $|Q_s|$ in the random growth process.
  On the other hand, with probability $1 - \delta_0$, Algorithm~\ref{algo:erdos-renyi} begins in a component of order $O(\log n)$, and so the asymptotic density of the queue at time $t = \pi n$ and $\pi > 0$ is zero.
  Thus for $\pi > \delta_0$ the expected density of $|Q_{\pi n}|$ is zero, while for smaller $\pi$, the expected density is asymptotically $\delta_0 (1 - \pi - \exp(-d \pi))$.
  But of course, if we condition on choosing a vertex in the giant component then the expected density of $Q_t$ is asymptotically $(1 - \pi - \exp(-d \pi))$ if $\pi < \delta_0$.
\end{proof}

\begin{example}
  Theorem~\ref{randomgraph} is an asymptotic result.
  To see how well it holds in practice we conduct an experiment on a large random graph. We expect that, because of the quasirandomness of Ramanujan graphs, a random graph $G$ with the same number of vertices and edge density as $\LPS(13, 61)$ should exhibit the same behavior for the size of the queue as appears in Figure \ref{figure:RamanujanQueue}. To that end we generated an Erd\H{o}s--R\'{e}nyi random graph $G$ with 113,460 nodes and 794,220 edges from the uniform distribution of graphs with the given numbers of nodes and edges. The resulting graph had average degree 14.00 and the spectral gap of the normalized adjacency matrix was about 0.4848. We ran Algorithm~\ref{algo:growth} on $G$ and examined the behavior of the queue. Figure \ref{figure:randomExperiment} shows the results compared against the results for the Ramanujan graph already shown in Figure \ref{figure:RamanujanQueue}. We omit the curve given by Theorem \ref{randomgraph} with $p = 14/113460$, as it is visually indistinguishable from the curve modeling the behavior of the queue in the random graph experiment.

It is worth mentioning that our choice of edge density and number of vertices means that our actual randomly-generated graph ended up being connected. This is not surprising from the view point of the connectivity threshold of Erd\H{o}s--R\'{e}nyi random graphs. Indeed this classic result of \cite{ErdosReyni:1960} establishes that for $p > \frac{\log n}{n}$ asymptotically almost surely $G \sim G(n, p)$ will be connected. In our case $p = 14/113460$ and $14 > \log(113460) \approx 11.64$. To have a randomly-generated graph with edge density $14/n$ which is not connected we would expect to need more than 1.2 million vertices. Moreover by Theorem \ref{ERPhaseTransition}, such a graph would have more than 99.9999\% of its vertices in the giant component.
\end{example}

\begin{figure}[tb]
\centering
\includegraphics{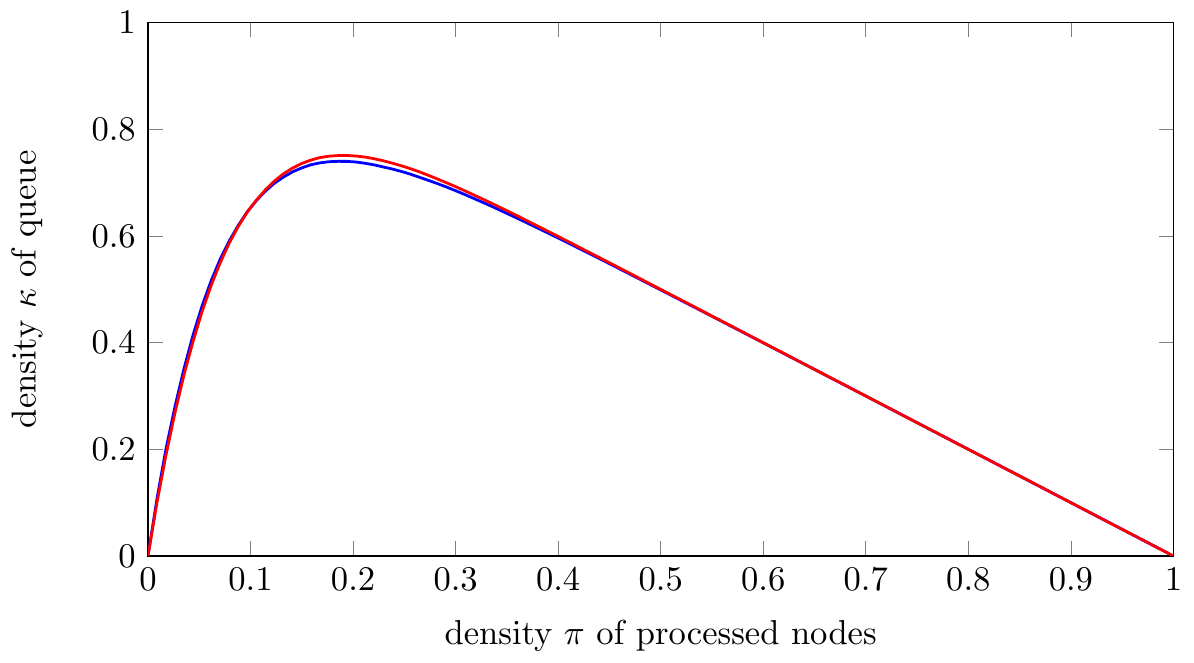}
\caption{The observed behavior of the queue in $G \sim G(113460, 14/133460)$ (blue) compared to the observed behavior of the queue in $\LPS(13, 61)$ (red). 
}
\label{figure:randomExperiment}
\end{figure}

\section{Enumerating triangulations in practice}\label{section:in_practice}

Standard software packages for enumerating triangulations include \topcom \cite{topcom}, \gfan \cite{gfan} and \mptopcom \cite{mptopcom}.
Both \topcom and \gfan employ random growth through the (regular) flip graph, whereas \mptopcom uses reverse search \cite{AF93}.
We would like to estimate when these programs will terminate early in the enumeration process.
To this end we try our methods developed for regular and nonregular graphs with good expansion.
In contrast to the previous sections our observations here are based on experiments only.

Many interesting point configurations are highly symmetric.
If there is a group acting on $\pointconf$, its action naturally lifts to the set of triangulations of~$\pointconf$.
Hence, one considers the \emph{quotient flip graph}, having orbits of triangulations as nodes.
Two orbits are connected by an edge if there is a pair of representatives from both orbits, which differ by a flip.
It suffices to visit the nodes of the quotient flip graph; orbits can be expanded later if needed (e.g., in an embarrassingly parallel manner).
All software systems mentioned support computing in quotient flip graphs.

For instance, a regular $k$-gon admits the natural action of the dihedral group $D_k$ by rotations and reflections.
Figure~\ref{fig:ngon} shows the quotient flip graph of a hexagon by the cyclic group of order six, which is a (normal) subgroup in $D_6$ of index two.
It has four nodes, and the average degree is $\overline{d}=(1+2\cdot 2+3)/4 = 2$.

There is no reason to believe that a general flip graph is a good expander.
Moreover, most flip graphs are not regular.
The same holds true for any of its quotients.
Yet, we will argue heuristically that the situation is maybe somewhat more benign, from a practical point of view.
Since flip graphs tend to be very large, in practice we will mostly encounter point configurations which are rather small, both in terms of dimension and number of points.
In many cases this will mean that the point configuration in question admits a substantial number of splits, even if there are many other coarsest subdivisions.
We believe that this should result in a rather low variance in the degree sequence of flip graphs within computational reach.
Moreover, the vast majority of the triangulations of a point configurations exhibits no symmetry at all, which means that most orbits of triangulations have the size of the entire group acting.
We believe that this should force that also the quotient flip graphs within computational reach still have a moderate variance in their degree sequence.

\begin{figure}[htb]
\centering
\includegraphics{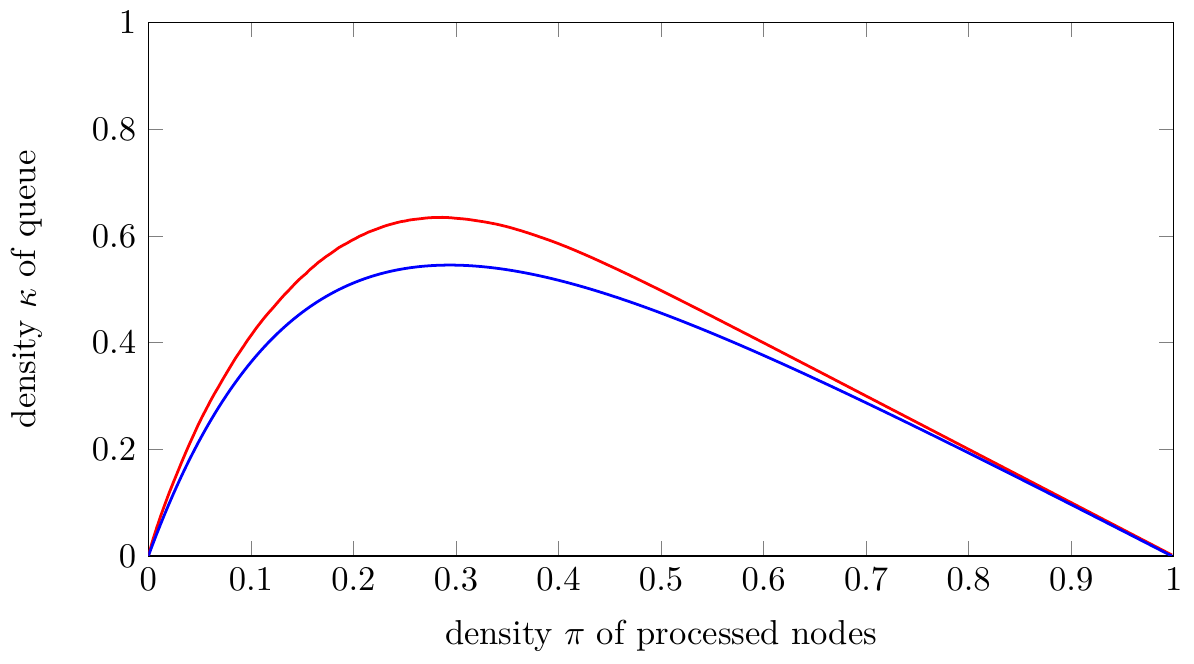}
\caption{
Lower bound on expected queue size from Theorem \ref{thm:main2} (blue) and a comparison to experimental data (red) for the flip graph of the 4-cube.
}
\label{figure:4cubeComparison}
\end{figure}

\begin{example}
  To see how strong of a comparison we can make between a flip graph and a non-regular Ramanujan graph, as in the discussion in Section \ref{sec:nonregular}, we consider the quotient flip graph of \emph{all} triangulations of the 4-cube, which is connected \cite{Pournin:2013}.
  The $4$-dimensional cube, with 16 vertices, has $92{,}487{,}256$ triangulations, partitioned into $247{,}451$ orbits with respect to the natural action of the Coxeter group of type B$_4$, whose order is $384$.
  Exactly $234{,}703$ orbits have the maximal length $384$; this is more than $94.8\%$.
  The corresponding numbers counting only regular triangulations can be found in \cite[\S6.3.5]{Triangulations}.

  The flip graph of all triangulation of the 4-cube has the $247{,}451$ orbits of triangulations as its vertices and it has $1{,}548{,}472$ edges. Thus it has average degree $\overline{d} \approx 12.5154$. In Figure \ref{figure:4cubeComparison} we show a comparison to the size of the queue as a function of the density of processed vertices, as the random growth process runs on this flip graph, compared to the lower bound given by $\beta(\upsilon, 12.5154,2\sqrt{12.5154 - 1})$ as $\upsilon$, the density of processed nodes, varies from 0 to 1.
  Visually, our heuristic works well in this case as the two curves are quite similar.
  
  Like the flip graphs of $k$-gons for higher $k$ in the regular setting, the quotient flip graph of the 4-cube itself does not actually have good expansion. The second largest eignvalue of the normalized adjacenty matrix is .9846 from computation in \matlab \cite{MATLAB:2019a}. Nonetheless, however, the two curves in Figure \ref{figure:4cubeComparison} match up well. 
  
\end{example}

We don't have a rigorous explanation for why random growth on flip graphs in our examples is so closely modeled by the behavior in regular Ramanujan-graphs in Theorems \ref{thm:main2} and \ref{thm:VertexCounts}. One possible reason is that the assumption of large spectral gap is stronger than what is needed to reach our conclusions. It is known that a $d$-regular Ramanujan graph on $n$ vertices has diameter roughly $\log n$ when $n$ is large (this is discussed, e.g., in \cite{HLW}).
While Table~\ref{tab:flip-k-gon} seems to suggest that flip graphs of $k$-gons do not have large spectral gap, in general, Sleator, Tarjan, Thurston \cite{SleatorTarjanThurston:1988}, and Pournin \cite{Pournin:2014} show they do have low diameter (again logarithmic in the number of vertices).
Perhaps low diameter is enough to imply that random growth works similarly to how it works in a similarly-sized Ramanujan graph.

We close this paper by returning to the triangulations of the 15-gon.
\mptopcom is based on the reverse search method of Avis and Fukuda \cite{AF93}, which is memory efficient and easy to parallelize \cite{mts}.
That algorithm implicitly picks a spanning tree of the (quotient) flip graph, in a deterministic way.
We applied Hall--Knuth random sampling \cite{HallKnuth:1965,Knuth:1975} to \mptopcom's reverse search tree of the flip graph of the 15-gon to estimate the size of the graph.
Figure~\ref{fig:knuth} shows how increasing the number of samples makes that estimate more precise.
Comparing with Figure~\ref{figure:15gon} is a bit difficult, since the $x$-axes are not the same.
Yet it seems that our estimates based on random growth and spectral expansion are superior, at least in this case.

\begin{figure}[htb]
\centering
\includegraphics{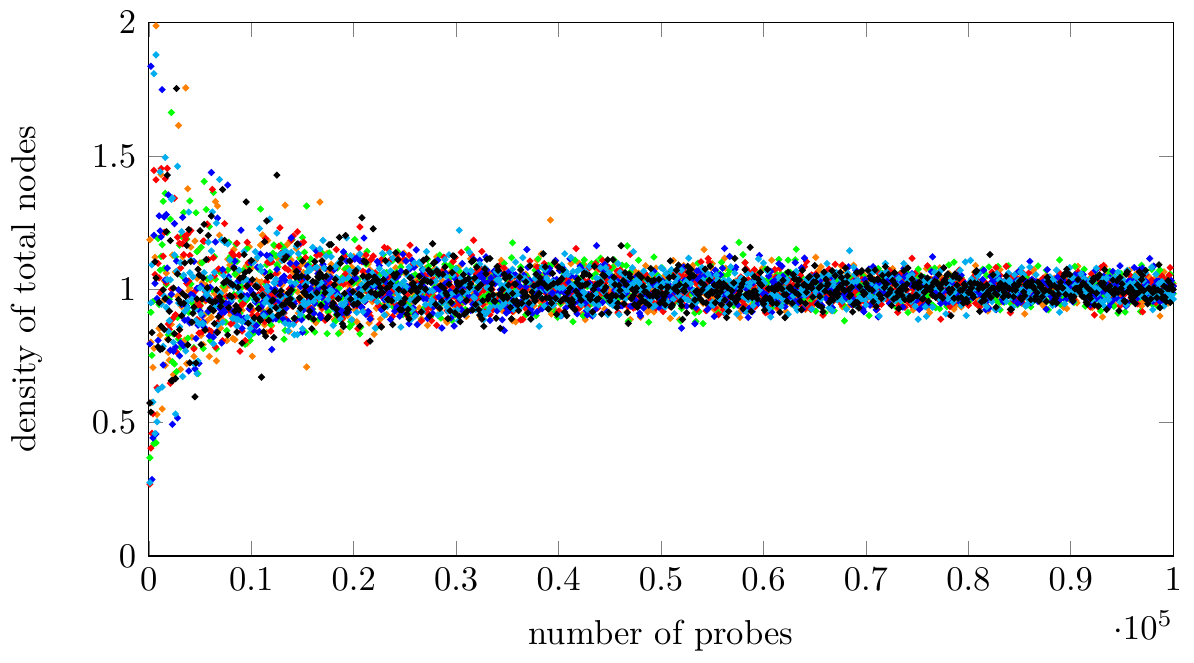}
\caption{
  Hall--Knuth estimates for the reverse search tree in the flip graph of the 15-gon.
  Every point stands for one estimate.
  The $y$-coordinate of the point is the estimate divided by the actual number of nodes ($742,900$).
  The $x$-coordinate indicates how many random probes into the tree were made.
}
\label{fig:knuth}
\end{figure}


\section*{Acknowledgments}
The authors thank Marek Kaluba for generating the examples of Lubotzky--Phillips--Sarnak Ramanujan graphs used in Section \ref{sec:RegularGraphs}, see \cite{LPS}.
Additionally, the authors thank Stefan Felsner, Christian Haase, Nati Linial, and Tibor Szab\'{o} for helpful discussions and comments on an earlier draft of this paper.

\printbibliography
\end{document}